\newtheorem{thm}{Theorem}
\newtheorem{lem}{Lemma}
\newtheorem{prop}{Proposition}
\newtheorem{rem}{Remark}
\newtheorem{cor}{Corollary}
\title{Parareal in time and spectral in space fast L1 quasilinear subdiffusion solver}
\author{
Josefa Caballero\thanks{Departamento de Matem\'aticas, Universidad de Las Palmas de Gran Canaria, Campus de Tafira Baja, $35017$ Las Palmas de Gran Canaria, Spain.}, \and
{\L}ukasz P{\l}ociniczak\thanks{Faculty of Pure and Applied Mathematics, Wroclaw University of Science and Technology, Wyb. Wyspia\'nskiego 27, 50-370 Wroc{\l}aw, Poland, \underline{corresponding author:} \texttt{lukasz.plociniczak@pwr.edu.pl}}, \vspace{4pt}\and
Kishin  Sadarangani\thanks{Departamento de Matem\'aticas, Universidad de Las Palmas de Gran Canaria, Campus de Tafira Baja, $35017$ Las Palmas de Gran Canaria, Spain.}
}
\date{}
\begin{document}
\maketitle

\begin{abstract}
	We consider the initial–boundary value problem for a quasilinear time‑fractional diffusion equation of order $0<\alpha\leq 1$, and develop a fully discrete solver combining the parareal algorithm in time with a L1 finite‑difference approximation of the Caputo derivative and a spectral Galerkin discretization in space. Our main contribution is the first rigorous convergence proof for the parareal–L1 scheme in this nonlinear subdiffusive setting. By constructing suitable energy norms and exploiting the orthogonality of the spectral basis, we establish that the parareal iterations converge exactly to the fully serial L1–spectral solution in a finite number of steps, with rates independent of the fractional exponent. The spectral spatial discretization yields exponential accuracy in space, while the parareal structure induces a clock speedup proportional to the number of processors, making the overall method highly efficient. Numerical experiments for both subdiffusive and classical diffusion problems confirm our theoretical estimates and demonstrate up to an order of magnitude reduction in computational time compared to the conventional sequential solver. We observe that the speedup of the parareal method increases linearly with the fine integrator degrees of freedom. 
\end{abstract}

\textbf{Keywords:} parareal, parallel in time integration, Caputo derivative, subdiffusion, spectral scheme, L1 scheme \\

\textbf{MSC codes:} 65M22, 65Y05, 35R11

\section{Introduction}
We consider the following nonlocal quasilinear partial differential equation, which models sub-diffusive processes in various contexts.
\begin{equation}\label{eqn:MainPDE}
	\partial^\alpha_t u = \nabla \cdot \left(D(x, t, u) \nabla u\right) + f(x,t,u), \quad x \in \Omega \subseteq \mathbb{R}^d, \quad t \in (0, T], \quad \alpha \in (0,1), 
\end{equation} 
with spatial dimension $d\in\mathbb{N}$, in an open domain $\Omega$, to the end time $T>0$. Both the diffusion coefficient $D(x,t,u)$ and the source $f(x,t,u)$ can depend on the solution, and we assume that they satisfy the usual regularity assumptions that grant the existence and uniqueness. Let $D\in C^2(\Omega, \mathbb{R}_+, \mathbb{R})$ and $f\in C^2(\Omega, \mathbb{R}_+, \mathbb{R})$. Furthermore, we require $D$ to be bounded from above and below, and we denote by $L>0$ the Lipschitz constant of the dependence of $f$ and $D$ on their third variable, that is,
\begin{equation}\label{eqn:Assumptions}
	0<D_-\leq D(x,t,u) \leq D_+, \quad |D(x,t,u)-D(x,t,v)| + |f(x,t,u) - f(x,t,v)| \leq L |u-v|.
\end{equation}
Under these assumptions, the problem is well-posed. Moreover, \cite{Zac12} established that \eqref{eqn:MainPDE} has a unique strong solution even when the conditions are relaxed. We further supplement \eqref{eqn:MainPDE} with the Dirichlet boundary condition and a chosen initial condition
\begin{equation}\label{eqn:MainPDEConditions}
	\begin{cases}
		u(x,0) = u_0(x), & x \in \Omega, \\
		u(x,t) = 0, & x \in \partial \Omega. \\
	\end{cases}
\end{equation}
We also assume that $u_0 \in H_0^1(\Omega) \cap H^2(\Omega)$. The temporal operator is the Caputo fractional derivative
\begin{equation}\label{eqn:Caputo}
	\partial^\alpha_t u(x,t) = \frac{1}{\Gamma(1-\alpha)} \int_0^t (t-s)^{-\alpha} u_t(x,s)ds, \quad 0<\alpha<1,
\end{equation}
which emergence naturally follows from the physical modeling of memory effects in various diffusive phenomena \cite{klafter2012fractional, metzler2000random}. 

Quasilinear subdiffusion equations are an interesting subject of study because of their ability to model a range of anomalous diffusion phenomena that arise in complex physical, biological, and material systems. Unlike classical diffusion, subdiffusion processes, characterized by slower than normal particle spreading, capture the dynamics observed in heterogeneous or disordered media, such as porous materials \cite{plociniczak2014approximation, plociniczak2015analytical, plociniczak2019derivation, pachepsky2000simulating, El20}, neurological tissues \cite{magin2010fractional}, polymers with memory effects \cite{metzler2000random, muller2011nonlinear}, protozoa migration \cite{alves2016transient}, single particle tracking in biophysics \cite{tabei2013intracellular, Sun17, wong2004anomalous}, plasma physics \cite{Del05}, astrophysics \cite{lawrence1993anomalous}, chemotaxis \cite{langlands2010fractional} and financial mathematics \cite{jacquier2020anomalous}. The study of \eqref{eqn:MainPDE} is also motivated by the rich mathematical structure it possesses, such as singularity in time derivatives, nonlocal structure, and the power-type decay of the solution (see, for example, \cite{allen2016parabolic, wittbold2021bounded, vergara2015optimal, akagi2019fractional, dipierro2019decay}). There is a need for specialized numerical techniques to ensure stability and accuracy when addressing nonsmooth initial data and long-term memory effects. Recent advances have focused on developing robust and efficient numerical methods, including the L1 scheme coupled with the finite element \cite{Jin19a, Mus18, plociniczak2022error,plociniczak2024fully, plociniczak2023linear} or the spectral scheme \cite{Lin07}, and fast convolution quadrature methods \cite{lopez2025convolution, cuesta2006convolution}.

The main purpose of this paper is to rigorously analyze the fully discrete numerical scheme used to solve \eqref{eqn:MainPDE} in which the time integration is done in parallel within the computational time. One of the well established ways of constructing such a scheme is via the \textit{parareal} algorithm introduced in 2001 by Lions, Maday, and Turinici \cite{lions2001resolution} (interestingly, the first idea of solving differential equations in parallel by the multiple shooting method dates back to 1964 \cite{nievergelt1964parallel}). The parareal method has gained significant attention as an efficient parallel-in-time algorithm designed to accelerate the solution of time-dependent differential equations \cite{gander2007analysis}. Traditional time-stepping methods often suffer from inherent sequential bottlenecks, which limit their scalability on modern parallel computing architectures. The parareal algorithm overcomes these limitations by decomposing the temporal domain into subintervals and iteratively correcting coarse approximations with fine, parallelizable integrators, reducing computational time while maintaining accuracy. Originally developed for linear and nonlinear evolution equations \cite{lions2001resolution}, the flexibility of the parareal has allowed its application to a broad class of problems, including control theory \cite{maday2002parareal}, quantum systems \cite{maday2007monotonic}, plasma physics \cite{reynolds2013analytic}, molecular dynamics \cite{baffico2002parallel}, and fractional diffusion models \cite{xu2015parareal, fu2019preconditioned, wu2017fast}. An overview of parallel-time integration methods, not only parareal, can be found in \cite{gander201550, ong2020applications}. The latest account on these matters can be found in \cite{gander2025time} where the parabolic versus hyperbolic equations are compared and contrasted. 

The results presented in this paper deal with the convergence analysis of the parareal method applied to the quasilinear subdiffusion equation \eqref{eqn:MainPDE}. In the literature, several papers rigorously addressed the convergence of the parareal in the sense that the discretisation parameters are kept fixed while the parareal iterations increase. The expected result states that the parareal method converges in a finite number of steps to the numerical solution obtained by the fine integrator. In \cite{gander2007analysis} the cases of linear ordinary and partial differential equations have been analyzed, and the authors provided convergence proofs and various numerical examples. These results were generalized in \cite{gander2008nonlinear} to systems of nonlinear ODEs. On the other hand, the parareal convergence in the general and abstract framework for PDEs was provided in \cite{bal2005convergence}. In PDEs various questions concerning the interplay between space and time discretization parameters with the parareal iterations arise. A common technique is to define the coarse and fine propagators to operate on different time grids with a fixed ration of steps. In the case of parabolic equations, it was shown in \cite{mathew2010analysis} that the parareal with backward Euler integrators is contractive with respect to iterations (authors also found the bound of the contraction constant to be close to $0.3$). This result was later carried over to some more advanced, but concrete, fine integrators in \cite{wu2015convergence} and recently to a general and robust case in \cite{yang2021robust}. Recently, some interesting results on the construction of optimal coarse integrators have been published in \cite{jin2025optimizing}. In addition, research on parallel-time integration methods for nonlocal differential equations has been recently initiated. The situation with such problems is much more difficult than in their local counterparts, since the whole history of the undergoing process affects the solution at the present time. Then it is not obvious how to build fine integrators. Some first approaches were done in \cite{xu2015parareal} (where the authors also considered the convergence of the scheme), \cite{wu2018parareal} where local in time integrators have been used to solve systems of fractional ODEs, and in \cite{fu2019preconditioned} where both the space and time nonlocality have been considered. Finally, in \cite{biala2018parallel} the authors investigated a semilinear subdiffusion equation and devised a parallel algorithm to solve it. 

The results of this paper can be summarized as follows:
\begin{itemize}
	\item We construct a parareal method based on the L1 scheme in time and the spectral Galerkin in space to facilitate \textbf{fast and accurate} computations;
	\item We use the fine integrator from \cite{fu2019preconditioned}, prove that it satisfies a natural Lipschitz condition, and find its \textbf{discretization error} for the realistic smoothness assumption;
	\item We prove the \textbf{convergence} of the parareal method in the sense that its error is bounded by a multiple of the error of the fine integrator. The multiplicative factor converges to a value, uniform in the iteration count, after a \textbf{finite number} of iterations;  
	\item Numerical computations verify that the \textbf{speedup} of the parareal method over the fine integrator \textbf{scales linearly} with the number of degrees of freedom reaching values of $10-15$ for $O(10^4)$ dof. Additionally, it requires much less allocated memory. 
\end{itemize}
To our knowledge, this is the first rigorous proof of the convergence of the parareal method for the quasilinear subdiffusion equation. 

In what follows, we will use $C>0$ to denote a generic constant that can depend on the given data, the solution of the continuous problem \eqref{eqn:MainPDE}, but not on the discretisation parameters or the parareal iteration count.

\section{Scheme construction}
We begin constructing our numerical method by first introducing the usual sequential scheme and then applying a parallelization to it.

\subsection{Fully discrete sequential scheme}
To derive the numerical scheme, we transform our problem \eqref{eqn:MainPDE} into a weak form by multiplying the equation by a test function $\chi \in H_0^1(\Omega)$ and integrating by parts.
\begin{equation}\label{eqn:MainPDEWeak}
	\left(\partial^\alpha_t u, \chi\right) + a(D(t,u); u, \chi) = (f(t,u), u, \chi), \quad \chi \in H_0^1(\Omega),
\end{equation}
where, as in common practice, we suppress the explicit dependence on $x$ for clarity since it is the integration variable in the usual $L^2(\Omega)$-inner product $(\cdot, \cdot)$. Also, we have introduced the bilinear form
\begin{equation}\label{eqn:AForm}
	a(w; u,v) := \int_\Omega w \, \nabla u \cdot \nabla v \,dx, \quad u, v \in H_0^1(\Omega), \quad w \in L^\infty(\Omega). 
\end{equation} 
The spatial (Galerkin) discretization of our PDE begins with choosing a suitable \emph{finite dimensional} subspace $V_N \subseteq H_0^1(\Omega)$ such that
\begin{equation}
	N := \dim V_N. 
\end{equation} 
Later, in the implementation, we will choose $V_N$ to be a space constructed with Legendre orthogonal polynomials which will lead to \emph{spectral scheme} in space. Note that if we were to choose $V_N$ to be spanned by the piecewise linear \emph{tent} functions, we would have arrived at the Finite Element Method (FEM). At this point, there is no need to specify, since the theory is general enough. 

By $u_N \in C^1((0,T]; V_N)$ denote the semi-discrete approximation of the solution of \eqref{eqn:MainPDEWeak} satisfying
\begin{equation}\label{eqn:MainPDEWeakSemi}
	\left(\partial^\alpha_t u_N, \chi\right) + a(D(t,u_N); u_N, \chi) = (f(t,u_N), u_N, \chi), \quad \chi \in V_N(\Omega),
\end{equation}
in which we average the solution over the finite-dimensional space $V_N$. Now, discretization in time proceeds using the L1 scheme (see \cite{del2025note}) to approximate the Caputo derivative by linear interpolation of the integrand. First, we introduce the (coarse) grid of points 
\begin{equation}
	T_n := n \Delta T, \quad \Delta T = \frac{T}{N_t}, \quad n = 0, 1, ..., N_t.
\end{equation}
Then, for any $y=y(t)$, the L1 scheme reads
\begin{equation}\label{eqn:L1Scheme}
	\begin{split}
		\partial^\alpha_t y(T_{n+1}) &\approx \frac{(\Delta T)^{-\alpha}}{\Gamma(2-\alpha)}\left(y(T_{n+1}) - b_{n} y(0) - \sum_{i = 1}^{n} \left(b_{n-i}-b_{n-i+1}\right) y(T_i)\right), \\ b_j &:= (j+1)^{1-\alpha} - j^{1-\alpha}. 
	\end{split}
\end{equation}
Now, we discretize our equation \eqref{eqn:MainPDEWeakSemi} in time in a semi-implicit manner, that is, we evaluate the diffusion coefficient and the source in a previous time step while keeping the gradient term in place at the present time. This yields the following system of equations \emph{linear} for the quasilinear problem \eqref{eqn:MainPDE} written in terms of a time-marching scheme.
\begin{equation}\label{eqn:NumericalSchemeSequential}
	\begin{split}
		(U_{n+1}, \chi) &+ \left(\Delta T\right)^\alpha \Gamma(2-\alpha) a(D(t_n, U_{n}); U_{n+1}, \chi) \\
		&= b_{n} \left(U_{0}, \chi\right) + \sum_{i = 1}^{n} \left(b_{n-i}-b_{n-i+1}\right) \left(U_{i}, \chi\right) + \left(\Delta T\right)^\alpha \Gamma(2-\alpha) (f(t_n, U_{n}), \chi), \quad \chi \in V_N, 
	\end{split}
\end{equation}
where $V_N \ni U_{n+1}(x) \approx u(x, t_{n+1})$ is the numerical approximation of the exact solution. Note that, for the brevity of the notation, we suppress the dependence on the number of degrees of freedom of the Galerkin scheme $N$, which remain fixed. This is, however, indicated with capital letters and justified by the fact that in the majority of our analysis, we will focus only on the time-marching and parareal iterations. Expanding both $U_{t}$ and $\chi$ from the above to the basis of $V_N$ one can infer that the matrix of the resulting system is symmetric. Therefore, in each time step, the linear system is \emph{uniquely solvable} and yields the numerical solution at time $T_{n+1}$. By $U_{0:n}$ we denote the juxtaposition of the solution at consecutive times (notation borrowed from \cite{xu2015parareal}), that is, $(U_{0}, U_{1}, ..., U_{n})$. The solution to \eqref{eqn:NumericalSchemeSequential} can then be denoted as
\begin{equation}\label{eqn:CoarsePropagator}
	U_{n+1} = \mathcal{G}_n\left(U_{0:n}\right).
\end{equation}
The operator $\mathcal{G}$ is called the (coarse) \emph{propagator} and codes the information that the solution at the next time step is a result of algebraic manipulations on the whole \emph{history} of the solution. This is a discrete analogue of the fact that the Caputo fractional derivative is a non-local operator. As we shall see, this has a profound impact on the construction of the parareal method. By the nature of the time-marching, the solution \eqref{eqn:CoarsePropagator} can only be obtained sequentially: we have to know all the previous steps to advance the solution further. Next, we describe how it can be used to parallelize the scheme. 

\subsection{Parareal algorithm}\label{sec:Parareal}
\begin{figure}
	\centering
	\begin{tikzpicture}
		\draw[->] (0,0) -- (16.5,0) node[right] {$t$};
		
		\foreach \x in {13,14,15} {
			\draw (\x,0.1) -- (\x,-0.1);
		}
		
		\foreach \n in {0,1,2,3,4} {
			\draw (\n*4,0.3) -- (\n*4,-0.3) node[below] {$T_{\n}$};
		}
		
		\foreach \r in {1,2,3} {
			\node[above] at (\r+12,0.1) {$t_{3,\r}$};
		}
	\end{tikzpicture}
	\caption{The coarse and the fine time grids.}
	\label{fig:TimeGrids}
\end{figure}
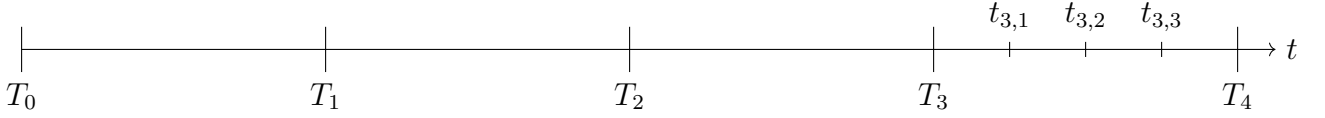

As we have seen, traditional time‐stepping schemes march forward sequentially, which does not make the utility of modern hardware that can work on many parallel threads at the same time. The parareal method overcomes this by coupling two propagators:
\begin{itemize}
	\item a \emph{coarse} solver \(\mathcal{G}\), which marches the solution only at the coarse time‐nodes
	$T_n = n\,\Delta T$, \(n=0,1,\dots,N_t\), with large step‐size \(\Delta T = T/N_t\),
	\item a \emph{fine} solver \(\mathcal{F}\), which works on a finer subgrid inside each coarse interval:
	\begin{equation}\label{eqn:FineGrid}
		t_{n,r} \;=\; T_n + r\,\Delta t,\quad r=0,1,\dots,M,\quad \Delta t = \frac{\Delta T}{M}, \quad M \in \mathbb{N},
	\end{equation}
	producing high‐accuracy solutions on each \([T_n,\,T_{n+1}]\) (see the graphical illustration in Fig. \ref{fig:TimeGrids}). 
\end{itemize}
From the above definition it is clear that $\mathcal{G}$ is computationally cheaper than the fine propagator $\mathcal{F}$. The main premise of the parareal algorithm is that, in practice, integrating the main equation sequentially on the whole interval $[0,T]$ with the fine propagator is prohibitively expensive. The idea is to evaluate $\mathcal{F}$ in \emph{parallel} on a number of independent processors at each of the coarse intervals $[T_n, T_{n+1}]$. Therefore, we simultaneously reduce the computational complexity of the problem and utilize parallelized hardware. 

The main difference between local and nonlocal equations is the fact that in the latter case we have to take into account the whole history of the solution. This might sound contradictory with the idea of parallelization: How do we apply the fine integrator without using the whole fine history? We will concretely answer to this question below, but now we just indicate that the fine propagator, which is suitable for parallel computations, can be defined as the one using some given coarse history $z_{0:n}$ and fine integrated values of the solution $y_{n, 0:r}$ in the last interval (see Fig. \ref{fig:TimeGrids})
\begin{equation}\label{eqn:FinePropagatorGeneral}
	y_{n, r+1} = \mathcal{F}_{n,r} \left(\text{present fine grained interval}; \text{history} \right)= \mathcal{F}_{n,r} \left(y_{n, 0:r}; z_{0:n}\right).
\end{equation}
In this way, we fine integrate only in the final interval and the whole history of the solution is given. The construction of the parareal algorithm in the classical (local) setting is straightforward. However, in a nonlocal setting, when history influences the present state of the solution, one has to be careful when designing the propagators. The easiest and optimal way to construct such a fine propagator is to use the L1 scheme to approximate the history $[0,T_n]$ on the coarse grid, while the solution in the last interval \([T_n, T_{n+1}]\) is discretized on the fine grid (this idea was introduced in \cite{fu2019preconditioned}). That is, the Caputo derivative of an arbitrary function \(y=y(t)\) can be approximated as follows
\begin{equation}
	\begin{split}
		\partial^\alpha_t y(t_{n,r}) 
		&= \frac{1}{\Gamma(1-\alpha)} \int_0^{t_{n,r}} (t_{n,r} - s)^{-\alpha} y'(s) ds \\
		&= \frac{1}{\Gamma(1-\alpha)} \left(\sum_{i=1}^n \int_{T_{i-1}}^{T_i} (t_{n,r} - s)^{-\alpha} y'(s) ds + \sum_{j=1}^r \int_{t_{n, j-1}}^{t_{n, j}} (t_{n,r} - s)^{-\alpha} y'(s) ds \right) \\
		&\approx \frac{1}{\Gamma(1-\alpha)} \left(\sum_{i=1}^n \frac{y(T_i)-y(T_{i-1})}{\Delta T} \int_{T_{i-1}}^{T_i} (t_{n,r} - s)^{-\alpha} ds \right.\\
		&\left.+ \sum_{j=1}^r \frac{y(t_{n,j}) - y(t_{n,j-1})}{\Delta t} \int_{t_{n, j-1}}^{t_{n, j}} (t_{n,r} - s)^{-\alpha} y'(s) ds \right),
	\end{split}
\end{equation}
where we have approximated $y=y(t)$ with a linear function on each of the subintervals. Evaluating integrals, we obtain
\begin{equation}\label{eqn:L1Fine}
	\begin{split}
		\partial^\alpha_t y(t_{n,r}) 
		&\approx \frac{1}{\Gamma(2-\alpha)} \left(\left(\Delta T\right)^{-\alpha} \sum_{i=1}^n b_{n-i+r/M} \left(y(T_i) - y(T_{i-1}) \right) + \left(\Delta t\right)^{-\alpha} \sum_{j=1}^r b_{r-j} \left(y(t_{n,j}) - y(t_{n, j-1}) \right) \right) \\
		&=\frac{1}{\Gamma(2-\alpha)} \left[\left(\Delta T\right)^{-\alpha} \left(b_{r/M}y(T_n)-b_{n-1+r/M} y(0)  - \sum_{i=1}^{n-1} \left(b_{n-i-1+r/M} - b_{n-i+r/M}\right) y(T_i) \right) \right.\\
		&\left.+ \left(\Delta t\right)^{-\alpha} \left(b_{0,}y(t_{n,r})-b_{r-1} y(t_{n,0})  - \sum_{j=1}^{r-1} \left(b_{r-j-1} - b_{r-j}\right) y(t_{n,j}) \right) \right] =: \delta^\alpha y(t_n),
	\end{split}
\end{equation}
where the symbol $b_j$ was defined in \eqref{eqn:L1Scheme}. All other terms in equation \eqref{eqn:MainPDEWeakSemi} remain exactly the same as in the coarse propagator scheme \eqref{eqn:CoarsePropagator}. That is, the fine grained approximation of $u(x, t)$ at time $t = t_{n,r}$ is the solution of the following system for $n \geq 0$ and $r = 0, 1, ..., M$ 
\begin{equation}\label{eqn:NumericalSchemeFine}
	\begin{split}
		(U_{n, r}, \chi) &+ \left(\Delta t\right)^\alpha \Gamma(2-\alpha) a(D(t_{n,r-1}, U_{n,r-1}); U_{n, r}, \chi) \\
		&= b_{r-1} \left(U_{n,0}, \chi\right) + \sum_{i = 1}^{r-1} \left(b_{r-j-1}-b_{r-j}\right) \left(U_{n,j}, \chi\right) + \left(\Delta t\right)^\alpha \Gamma(2-\alpha) (f(t_{n,r-1}, U_{n, r-1}), \chi), \\
		&+M^{-\alpha}\left(-b_{r/M} \left(U_{n}, \chi\right) + b_{n-1+r/M} \left(U_{0}, \chi\right) + \sum_{i=1}^{n-1} \left(b_{n-i-1+r/M} - b_{n-i+r/M}\right) \left(U_{i}, \chi\right) \right), \quad \chi \in V_N, 
	\end{split}
\end{equation}
where the last part is the coarse history. Of course, $U_{n,0} = U_n$. Similarly as above, the solution of the following system $U_{n,r}$ defines the fine integrator
\begin{equation}\label{eqn:FinePropagator}
	U_{n, r} = \mathcal{F}_{n, r-1} \left(U_{n,0:r-1}; U_{0:n}\right).
\end{equation}

The parareal is based on iterating parallel fine-propagator computations and sequentially correcting the numerical approximation via the coarse propagator. By $U^k_n$ denote the parareal approximation of the solution to \eqref{eqn:MainPDE} after $k$ iterations. Therefore, approximations $V_N \ni U_n^k(x) \approx u(x, T_n)$ after $k$ parareal iterations are sought. The parareal algorithm proceeds in two steps. 

\medskip
\noindent\textbf{1. Initialization through the coarse propagator.}  
Starting from \(U_0^0 = u_0\) perform a single sequential sweep over the coarse grid.
\begin{equation}
	U_{n+1}^0 = \mathcal{G}_n\left(U^0_{0:n}\right), \quad n = 0,1,\dots,N-1.
\end{equation}
This yields a rough and cheap prediction of the solution in each coarse time slice \(T_n\).

\medskip
\noindent\textbf{2. Iterative correction with fine solvers.}  
For each iteration \(k = 0,1,\dots,K-1\), set \(U_0^{k+1} = u_0\) and then in \emph{parallel} for all \(n=0,\dots,N-1\) compute:

\begin{enumerate}
	\item[a)] A fine‐grid propagation on each \([T_n, T_{n+1}]\):
	\begin{equation}
		\widetilde U_{n+1} = \mathcal{F}_{n, M}\left(U^k_{n, 0:M}; U^k_{0:n}\right),
	\end{equation}
	which in practice integrates step by step through the points \(t_{n,1},t_{n,2},\dots,t_{n,M}\) and uses the history of the coarse propagator. 
	\item[b)] A corrective update on the coarse nodes:
	\begin{equation}\label{eqn:Parareal}
		U_{n+1}^{k+1} = \mathcal{G}\left(U_n^{k+1}\right) + \left(\widetilde U_{n+1} - \mathcal{G}\left(U_n^k\right)\right).
	\end{equation}
\end{enumerate}
Suppose that for $k\rightarrow\infty$ (we prove in the following that convergence is actually obtained after finite number of iterations) the parareal approximation converges to some $U^*_n$, then by \eqref{eqn:Parareal} we have
\begin{equation}\label{eqn:PararealConvergence}
	U^*_{n+1} = \mathcal{G}\left(U_n^{*}\right) + \left(\mathcal{F}_{n, M}\left(U^*_{n, 0:M}; U^*_{0:n}\right)- \mathcal{G}\left(U_n^*\right)\right),
\end{equation}
that is, \(U^*_{n+1} = \mathcal{F}_{n, M}\left(U^*_{n, 0:M}; U^*_{0:n}\right)\). Hence, the limit of iterations is the fine numerical solution of the problem. In practice, a sufficient accuracy can be obtained after just a few iterations, in which the most expensive computations can be done in parallel. This is the main advantage of using the parareal method for which the flowchart is given in Fig. \ref{fig:parareal}.


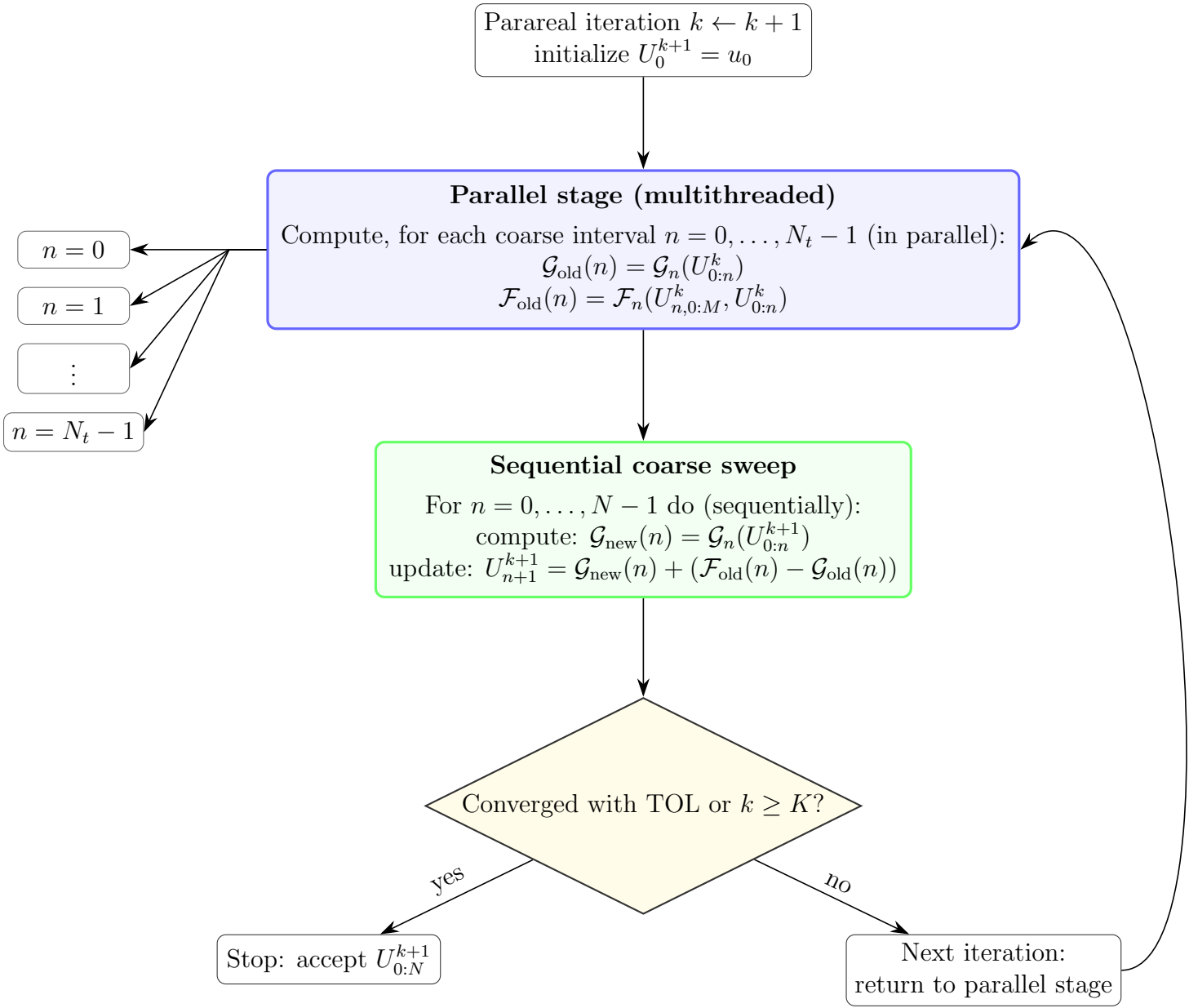
\begin{figure}
	\centering
	\begin{tikzpicture}[
		node distance = 10mm,
		box/.style = {rectangle, draw=black!80, rounded corners, minimum width=36mm, minimum height=8mm, align=center, fill=white},
		parbox/.style = {rectangle, draw=blue!60, very thick, rounded corners, inner sep=6pt, align=center, fill=blue!5},
		seqbox/.style = {rectangle, draw=green!60, very thick, rounded corners, inner sep=6pt, align=center, fill=green!5},
		small/.style = {rectangle, draw=black!60, rounded corners, minimum width=18mm, minimum height=6mm, align=center, fill=white},
		decision/.style = {diamond, draw=black!80, thick, aspect=2, inner sep=2pt, align=center, fill=yellow!10},
		arrow/.style = {-{Stealth[length=3mm,width=2mm]}, semithick},
		parallelbrace/.style = {decorate,decoration={brace,amplitude=6pt,mirror,raise=4pt},thick}
		]
		
		\node[box] (start) {Parareal iteration $k \leftarrow k+1$\\ initialize $U^{k+1}_0 = u_0$};
		
		\node[parbox, below=15mm of start] (parallel) {
			\textbf{Parallel stage (multithreaded)}\\[4pt]
			Compute, for each coarse interval $n=0,\dots,N_t-1$ (in parallel):\\
			$\mathcal{G}_{\text{old}}(n) = \mathcal{G}_n(U^k_{0:n})$\\
			$\mathcal{F}_{\text{old}}(n) = \mathcal{F}_n(U^k_{n,0:M}, U^k_{0:n})$
		};
		
		\node[small, left=22mm of parallel.west, anchor=east] (task0) {$n=0$};
		\node[small, below=3mm of task0] (task1) {$n=1$};
		\node[small, below=3mm of task1] (task2) {$\vdots$};
		\node[small, below=3mm of task2] (task3) {$n=N_t-1$};
		\draw[arrow] (parallel.west) -- ++(-6mm,0) coordinate (pstart) -- (task0.east);
		\draw[arrow] (parallel.west) -- ++(-6mm,0) -- (task1.east);
		\draw[arrow] (parallel.west) -- ++(-6mm,0) -- (task2.east);
		\draw[arrow] (parallel.west) -- ++(-6mm,0) -- (task3.east);
		
		\node[seqbox, below=18mm of parallel] (sequential) {
			\textbf{Sequential coarse sweep}\\[4pt]
			For $n=0,\dots,N-1$ do (sequentially):\\
			compute: $\mathcal{G}_{\text{new}}(n) = \mathcal{G}_n(U^{k+1}_{0:n})$\\
			update: $U^{k+1}_{n+1} = \mathcal{G}_{\text{new}}(n) + (\mathcal{F}_{\text{old}}(n) - \mathcal{G}_{\text{old}}(n))$
		};
		
		\node[decision, below=16mm of sequential] (decide) {Converged with TOL or $k\ge K$?};
		
		\node[box, below left=12mm and 15mm of decide] (end) {Stop: accept $U^{k+1}_{0:N}$};
		\node[box, below right=12mm and 15mm of decide] (loop) {Next iteration: \\ return to parallel stage};
		
		\draw[arrow] (start) -- (parallel);
		\draw[arrow] (parallel) -- (sequential);
		\draw[arrow] (sequential) -- (decide);
		\draw[arrow] (decide) -- node[midway, above, sloped] {yes} (end);
		\draw[arrow] (decide) -- node[midway, above, sloped] {no} (loop);
		
		\draw[arrow] (loop.east) .. controls +(+22mm,0) and +(+22mm,20mm) .. (parallel.east);
		
	\end{tikzpicture}
	\caption{The parareal algorithm.}
	\label{fig:parareal}
\end{figure}

\section{Analysis}
Now, after constructing all the necessary methods and algorithms, we can proceed to proving their properties. 

\subsection{Properties of integrators}
First, we claim that both the coarse and fine propagators are \(L^2\)-Lipschitz (the \(L^\infty\)-Lipschitz property is straightforward).  

\begin{prop}\label{prop:Lipschitz}
	Let $\left\{V_n\right\}_n$, $\left\{W_n\right\}_n$, and $\left\{Z_{n,r}^{(1,2)}\right\}_{n,r}$ be sequences of functions from $V_N$. Then, 
	\begin{equation}
		\begin{split}
			\|\mathcal{G}_n\left(V_{0:n}\right) - \mathcal{G}_n\left(W_{0:n}\right)\| 
			&\leq \sqrt{\frac{1+C (\Delta T)^\alpha \Gamma(2-\alpha)}{1-L_f \left(\Delta T\right)^\alpha \Gamma(2-\alpha)}} \max_{0\leq i\leq n}\|V_i - W_i\|, \\
			\left\|\mathcal{F}_{n,r}\left(Z^{(1)}_{n,0:r}; V_{0:n}\right) - \mathcal{F}_{n,r}\left(Z^{(2)}_{n,0:r}; W_{0:n}\right)\right\| 
			&\leq \frac{1}{\sqrt{1-L_f (\Delta t)^\alpha \Gamma(2-\alpha)}} \\
			\times\left(\sqrt{1+ C(\Delta t)^\alpha \Gamma(2-\alpha)} \max_{0\leq i\leq r-1} \|Z^{(1)}_{n,i} - Z^{(2)}_{n,i}\| \right. & \left.+ \sqrt{2 b_{r/M}} M^{-\alpha} \max_{0\leq j\leq n} \|V_j - W_j\|\right),
		\end{split}
	\end{equation}
	where the constant $C>0$ depends on $D$, $f$ and not on $\alpha$. 
\end{prop}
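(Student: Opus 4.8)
The plan is to test both propagator equations with a carefully chosen test function—namely the solution difference itself—and then exploit the coercivity of the bilinear form $a$, the Lipschitz assumptions \eqref{eqn:Assumptions}, and the nonnegativity of the L1 weights $b_j$. First I would set $E_n := \mathcal{G}_n(V_{0:n}) - \mathcal{G}_n(W_{0:n})$, subtract the two copies of \eqref{eqn:NumericalSchemeSequential}, and choose $\chi = E_{n+1}$. On the left this produces $\|E_{n+1}\|^2 + (\Delta T)^\alpha\Gamma(2-\alpha)\, a(D(t_n,V_n); E_{n+1}, E_{n+1})$ plus a cross-term $(\Delta T)^\alpha\Gamma(2-\alpha)\big(a(D(t_n,V_n);W_{n+1},E_{n+1}) - a(D(t_n,W_n);W_{n+1},E_{n+1})\big)$ coming from the difference of diffusion coefficients; the latter is controlled by $L\,\|V_n-W_n\|$ times $\|\nabla W_{n+1}\|\,\|\nabla E_{n+1}\|$, and after a Young/Cauchy–Schwarz splitting its "bad" part is absorbed into the coercive term $D_-\|\nabla E_{n+1}\|^2$ while the remainder contributes a $C(\Delta T)^\alpha$ term (this is where the constant $C$, depending on $D$, $f$, and the fixed continuous solution through $\|\nabla W_{n+1}\|$, enters). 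On the right-hand side, the source difference gives $(\Delta T)^\alpha\Gamma(2-\alpha)(f(t_n,V_n)-f(t_n,W_n),E_{n+1}) \le L_f(\Delta T)^\alpha\Gamma(2-\alpha)\|E_n\|\,\|E_{n+1}\|$, and the history terms give $\big(b_n (E_0,E_{n+1}) + \sum_{i=1}^n (b_{n-i}-b_{n-i+1})(E_i,E_{n+1})\big)$.

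The key algebraic observation is that the history weights are nonnegative and telescope to one: $b_n + \sum_{i=1}^n (b_{n-i}-b_{n-i+1}) = b_0 = 1$. Hence by Cauchy–Schwarz and this partition of unity the entire history contribution is bounded by $\big(\max_{0\le i\le n}\|E_i\|\big)\,\|E_{n+1}\|$. Combining everything, dropping the now-nonnegative coercive remainder, and dividing by $\|E_{n+1}\|$ yields
\begin{equation*}
\big(1 - L_f(\Delta T)^\alpha\Gamma(2-\alpha)\big)\|E_{n+1}\| \le \big(1 + C(\Delta T)^\alpha\Gamma(2-\alpha)\big)^{1/2}\cdot\big(1-L_f(\Delta T)^\alpha\Gamma(2-\alpha)\big)^{1/2}\max_{0\le i\le n}\|E_i\|,
\end{equation*}
after one more Young-type rearrangement of the $\|\nabla E_{n+1}\|^2$ absorption, which is exactly the claimed coarse-propagator estimate once we divide through by $\sqrt{1-L_f(\Delta T)^\alpha\Gamma(2-\alpha)}$. (The precise bookkeeping of how the $1+C(\Delta T)^\alpha$ factor emerges from the quadratic-in-$\|\nabla E_{n+1}\|$ inequality is routine but must be done with care to land on a square root rather than a bare linear factor.)

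For the fine propagator I would repeat the argument verbatim on \eqref{eqn:NumericalSchemeFine} with $\chi$ equal to the difference $G_{n,r} := \mathcal{F}_{n,r}(Z^{(1)}_{n,0:r};V_{0:n}) - \mathcal{F}_{n,r}(Z^{(2)}_{n,0:r};W_{0:n})$, now with step $\Delta t$ in place of $\Delta T$. The new feature is the coarse-history block carrying the prefactor $M^{-\alpha}$: its weights are $-b_{r/M}, b_{n-1+r/M}$, and $b_{n-i-1+r/M}-b_{n-i+r/M}$ for $i=1,\dots,n-1$, whose absolute values sum (again by telescoping, since $-b_{r/M} + b_{n-1+r/M} + \sum_{i=1}^{n-1}(b_{n-i-1+r/M}-b_{n-i+r/M}) = 0$, and all non-leading terms have a common sign) to $2 b_{r/M}$; a Cauchy–Schwarz over this block gives the $\sqrt{2b_{r/M}}\,M^{-\alpha}\max_j\|V_j-W_j\|$ term. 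The fine local-history weights $b_{r-1}$ and $b_{r-j-1}-b_{r-j}$ again partition unity, yielding the $\sqrt{1+C(\Delta t)^\alpha\Gamma(2-\alpha)}\max_{i\le r-1}\|Z^{(1)}_{n,i}-Z^{(2)}_{n,i}\|$ term, and the source and diffusion-difference terms are handled exactly as before, contributing the $1-L_f(\Delta t)^\alpha\Gamma(2-\alpha)$ denominator and feeding into the $C$ constant. The main obstacle I anticipate is purely technical: tracking the two competing quadratic terms in $\|\nabla G_{n,r}\|$ (one from coercivity, sign $+$; one from the diffusion-coefficient Lipschitz bound after Young's inequality, sign $-$) so that the absorption is valid and the surviving $\|G_{n,r}\|^2$-coefficient is exactly $1 + C(\Delta t)^\alpha\Gamma(2-\alpha)$ under the square root, plus verifying that the mixed right-hand side (two different data sequences $Z^{(1,2)}$ and $V,W$) can be split additively into the two displayed contributions without cross-terms—this follows from the triangle inequality applied before squaring, i.e. one should bound $\|G_{n,r}\|$ by the sum of two single-source bounds rather than squaring first.
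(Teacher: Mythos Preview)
Your approach is correct and mirrors the paper's proof: subtract the two scheme equations, test with the output difference, absorb the diffusion cross-term into the coercive term via an $\epsilon$-Young inequality (the paper uses $\epsilon = 2D_-/(L_D\|\nabla W_{n+1}\|_\infty)$), and exploit the telescoping of the L1 weights---summing to $1$ for the local history and to $2b_{r/M}$ in absolute value for the coarse-history block of $\mathcal{F}$. Two minor tidying points: your $E_i$ in the history sum should denote the \emph{input} difference $V_i-W_i$ rather than the propagator output you defined as $E_n$, and the paper works entirely at the level of squared norms (using $ab\le (a^2+b^2)/2$ throughout) so that the square-root factor falls out directly, avoiding the linear/quadratic mixing in your displayed intermediate inequality.
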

\begin{proof}
	Take any sequences $\{V_n\}_{n\in N}$, $\{W_n\}_{n\in N}$ that belong to $V_N$. From the definition of the coarse propagator \eqref{eqn:CoarsePropagator} we know that $V_{n+1} = \mathcal{G}_n\left(V_{0:n}\right)$ and similarly for $W_n$. Put $V_n$ and $W_n$ in \eqref{eqn:NumericalSchemeSequential} and subtract to obtain
	\begin{equation}
		\begin{split}
			\left(\mathcal{G}_n\left(V_{0:n}\right) - \mathcal{G}_n\left(W_{0:n}\right), \chi\right) &+ \left(\Delta T\right)^\alpha \Gamma(2-\alpha) \left(a(D(t_n, V_{n}); V_{n+1} - W_{n+1}, \chi) \right.\\
			&\left.+ a(D(t_n, V_{n})-D(t_n, W_{n}); W_{n+1}, \chi)\right)\\
			&= b_{n} \left(V_{0}-W_{0}, \chi\right) + \sum_{i = 1}^{n} \left(b_{n-i}-b_{n-i+1}\right) \left(V_{i}-W_{i}, \chi\right) \\
			&+ \left(\Delta T\right)^\alpha \Gamma(2-\alpha) (f(t_n, V_{n}) - f(t_n, W_{n}), \chi), \quad \chi \in V_N.
		\end{split}
	\end{equation}
	Now, choose $\chi = \mathcal{G}_n\left(V_{0:n}\right) - \mathcal{G}_n\left(W_{0:n}\right)$. Then, by using the boundedness of $D$ and the definition of the $a$-form \eqref{eqn:AForm}, the left-hand side of the above becomes
	\begin{equation}
		\begin{split}
			\text{Left } &\geq \|\mathcal{G}_n\left(V_{0:n}\right) - \mathcal{G}_n\left(W_{0:n}\right)\|^2 + \left(\Delta T\right)^\alpha \Gamma(2-\alpha) D_- \|\nabla\left(\mathcal{G}_n\left(V_{0:n}\right) - \mathcal{G}_n\left(W_{0:n}\right)\right)\|^2 \\
			&- L_D\left(\Delta T\right)^\alpha \Gamma(2-\alpha)\|V_n - W_n\| \|\nabla W_{n+1}\|_\infty \|\nabla\left(\mathcal{G}_n\left(V_{0:n}\right) - \mathcal{G}_n\left(W_{0:n}\right)\right)\|.
		\end{split}
	\end{equation}
	Now, for the last term, we use the boundedness of $\|\nabla W_{n+1}\|_\infty \leq W$ and the $\epsilon$-Cauchy inequality $ab \leq (\epsilon^{-1} a^2 + \epsilon^{1} b^2)/2$ with a suitable $\epsilon = 2 D_-/(L_D W)$ for $a=\|V_n - W_n\|_\infty$ and $b = \|\nabla\left(\mathcal{G}_n\left(V_{0:n}\right) - \mathcal{G}_n\left(W_{0:n}\right)\right)\|$, we further have
	\begin{equation}\label{eqn:LipschitzLeft}
		\begin{split}
			\text{Left } &\geq \|\mathcal{G}_n\left(V_{0:n}\right) - \mathcal{G}_n\left(W_{0:n}\right)\|^2 + \left(\Delta T\right)^\alpha \Gamma(2-\alpha) \left(D_- \|\nabla\left(\mathcal{G}_n\left(V_{0:n}\right) - \mathcal{G}_n\left(W_{0:n}\right)\right)\|^2 \right. \\
			&\left.- C\|V_n - W_n\|^2 - D_-\|\nabla\left(\mathcal{G}_n\left(V_{0:n}\right) - \mathcal{G}_n\left(W_{0:n}\right)\right)\|^2 \right) \\
			&= \|\mathcal{G}_n\left(V_{0:n}\right) - \mathcal{G}_n\left(W_{0:n}\right)\|^2 - C\left(\Delta T\right)^\alpha \Gamma(2-\alpha)\|V_n - W_n\|^2,
		\end{split}
	\end{equation}
	with a suitable constant $C>0$. On the other hand, the right-hand side, by Cauchy inequality $a b \leq (a^2+b^2)/2$ and the Lipschitz condition on $f$, becomes
	\begin{equation}
		\begin{split}
			\text{Right } 
			&\leq \frac{1}{2}\left(b_n \|V_{0}-W_{0}\|^2 + \sum_{i=1}^n \left(b_{n-i} - b_{n-i+1}\right)\|V_{i}-W_{i}\|^2\right) + \frac{1}{2} \|\mathcal{G}_n\left(V_{0:n}\right) - \mathcal{G}_n\left(W_{0:n}\right)\|^2 \\
			&+ \frac{1}{2}L_f \left(\Delta T\right)^\alpha \Gamma(2-\alpha) \left(\|V_n - W_n\|^2 + \|\mathcal{G}_n\left(V_{0:n}\right) - \mathcal{G}_n\left(W_{0:n}\right)\|^2\right),
		\end{split}
	\end{equation}
	which, after taking maximum over the previous times is
	\begin{equation}
		\begin{split}
			\text{Right } 
			&\leq \frac{1}{2}\left(b_n + \sum_{i=1}^n \left(b_{n-i} - b_{n-i+1}\right)\right)\max_{0\leq i \leq n}\|V_{i}-W_{i}\|^2 + \frac{1}{2} \|\mathcal{G}_n\left(V_{0:n}\right) - \mathcal{G}_n\left(W_{0:n}\right)\|^2 \\
			&+ \frac{1}{2}L_f \left(\Delta T\right)^\alpha \Gamma(2-\alpha) \left(\|V_n - W_n\|^2 + \|\mathcal{G}_n\left(V_{0:n}\right) - \mathcal{G}_n\left(W_{0:n}\right)\|^2\right).
		\end{split}
	\end{equation}
	And since the series is telescoping to $b_0 = 1$, we have
	\begin{equation}\label{eqn:LipschitzRight}
		\begin{split}
			\text{Right } 
			&\leq \frac{1}{2}\max_{0\leq i \leq n}\|V_{i}-W_{i}\|^2 + \frac{1}{2} \|\mathcal{G}_n\left(V_{0:n}\right) - \mathcal{G}_n\left(W_{0:n}\right)\|^2 \\
			&+ \frac{1}{2}L_f \left(\Delta T\right)^\alpha \Gamma(2-\alpha) \left(\max_{0\leq i \leq n}\|V_i - W_i\|^2 + \|\mathcal{G}_n\left(V_{0:n}\right) - \mathcal{G}_n\left(W_{0:n}\right)\|^2\right).
		\end{split}
	\end{equation}
	Now, combining the left \eqref{eqn:LipschitzLeft} and right \eqref{eqn:LipschitzRight} estimates we arrive at
	\begin{equation}
		\begin{split}
			\frac{1}{2}\|\mathcal{G}_n\left(V_{0:n}\right) - \mathcal{G}_n\left(W_{0:n}\right)\|^2 &\leq \frac{1}{2}\left(1+C (\Delta T)^\alpha \Gamma(2-\alpha)\right) \max_{0\leq i\leq n}\|V_i - W_i\|^2 \\
			&+  \frac{1}{2}L_f \left(\Delta T\right)^\alpha \Gamma(2-\alpha) \|\mathcal{G}_n\left(V_{0:n}\right) - \mathcal{G}_n\left(W_{0:n}\right)\|^2,
		\end{split}
	\end{equation}
	where we have renamed the constant $C>0$. Simplifying, we obtain
	\begin{equation}
		\|\mathcal{G}_n\left(V_{0:n}\right) - \mathcal{G}_n\left(W_{0:n}\right)\|^2 \leq \frac{1+C (\Delta T)^\alpha \Gamma(2-\alpha)}{1-L_f \left(\Delta T\right)^\alpha \Gamma(2-\alpha)} \max_{0\leq i\leq n}\|V_i - W_i\|^2,
	\end{equation}
	which completes the proof of $\mathcal{G}$ being Lipschitz. The proof for $\mathcal{F}$ proceeds in a completely analogous way by using the $\epsilon$-Cauchy and Schwarz inequalities with the additional term representing history. 
\end{proof}

\begin{rem}
	It follows from the proof, that for the linear diffusion, that is when $D=D(x,t)$ and $f=f(x,t)$, we can take $C = L_f = 0$.
\end{rem}

\begin{rem}
	When $\Delta T \rightarrow 0^+$ we asymptotically have
	\begin{equation}
		\sqrt{\frac{1+C (\Delta T)^\alpha \Gamma(2-\alpha)}{1-L_f \left(\Delta T\right)^\alpha \Gamma(2-\alpha)}} \sim 1 + \frac{\Gamma(2-\alpha)}{2} (C+L_f) \left(\Delta T\right)^\alpha,
	\end{equation}
	that is, the coarse L1 integrator satisfies the usual Lipschitz condition with the $(\Delta T)^\alpha$ term (compare \cite{xu2015parareal}, where authors assume that the $L^\infty$ constant is $1+C \Delta t$, which is not valid for the discretisation of the Caputo derivative). 
\end{rem}

Now, since the fine integrator \eqref{eqn:NumericalSchemeFine} is constructed in a non-standard way, we have to find its discretisation error. This, in turn, strongly depends on the regularity of the solution. One shows that a solution of the linear subdiffusion equation satisfies \cite{Sak11}
\begin{equation}
	u(x,t) = u_0(x) + t^\alpha \phi(x,t), 
\end{equation}
where $\phi$ is smooth up to $t=0$. That is, the typical solution is continuous near the origin with a derivative that blows up as $t^{\alpha-1}$ (see relevant results for diffusion nonlocal in space, in \cite{del2025numerical}). To simplify notation and since the space variable is not important in the following, assume that we have a function $y=y(t)$ satisfying
\begin{equation}
	y(t) = g(t) + t^\alpha k(t),
\end{equation}
where $g$ and $k$ are Lipschitz continuous on $[0,T]$. Note that then for $t>s$ we have
\begin{equation}
	|y(t) - y(s)| \leq |g(t) - g(s)| + \left(t^\alpha-s^\alpha\right) |k(t)| + s^\alpha |k(t) - g(s)|,
\end{equation}
now by the elementary inequality (see \cite{del2025numerical}, Appendix B) for $a>b>0$ stating that $a^\alpha-b^\alpha \leq a^{\alpha-1}(a-b)$, we obtain
\begin{equation}\label{eqn:DiffusionRegularity}
	|y(t) - y(s)| \leq (C+T^\alpha) (t-s) + t^{\alpha-1} (t-s) = C (1+t^{\alpha-1})(t-s).
\end{equation}
In \cite{del2025numerical} it has been proved that precisely this regularity is enjoyed by a general class of solutions of the space-time nonlocal diffusion equation under weak assumptions on the data.

\begin{prop}\label{prop:L1Hybrid}
	Let $y = y(t)$ be a $C[0,T]$ function satisfying the regularity assumption \eqref{eqn:DiffusionRegularity}. Then, with $\delta^\alpha$ defined in \eqref{eqn:L1Fine}, we have
	\begin{equation}
		|\partial^\alpha_t y(t_{n,r}) - \delta^\alpha y(t_{n,r})| \leq C
		\begin{cases}
			\begin{cases} 1, & r = 1, \\ (\Delta t)^{1-\alpha} t_{0,r}^{\alpha-1}, & r\geq 2\end{cases}, & n = 0, \vspace{2pt}\\
			\dfrac{M^\alpha}{\Gamma(1-\alpha)}, & n = 1, \vspace{2pt}\\
			(\Delta t)^{1-\alpha} T_n^{\alpha-1} M^{\alpha+1}, & n \geq 2,
		\end{cases}
	\end{equation}
	where $C$ depends only on the function $y$.
\end{prop}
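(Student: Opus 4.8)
The starting point is to read off from \eqref{eqn:L1Fine} that $\delta^\alpha y(t_{n,r})$ is precisely the Caputo integral $\frac{1}{\Gamma(1-\alpha)}\int_0^{t_{n,r}}(t_{n,r}-s)^{-\alpha}y'(s)\,ds$ with $y$ replaced by its continuous piecewise linear interpolant $\Pi y$, interpolating at the coarse nodes $T_i$ on $[0,T_n]$ and at the fine nodes $t_{n,j}$ on $[T_n,t_{n,r}]$. Hence the consistency error decomposes into local quadrature errors,
\[
\partial^\alpha_t y(t_{n,r})-\delta^\alpha y(t_{n,r})=\frac{1}{\Gamma(1-\alpha)}\left(\sum_{i=1}^{n}R(T_{i-1},T_i)+\sum_{j=1}^{r}R(t_{n,j-1},t_{n,j})\right),
\]
where, for $0\le a<b\le t_{n,r}$, $R(a,b)=\int_a^b(t_{n,r}-s)^{-\alpha}\left(y'(s)-(\Pi y)'(s)\right)ds$. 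Since \eqref{eqn:DiffusionRegularity} already forces $y$ to be absolutely continuous, I would integrate by parts on each cell; the interpolant matches $y$ at the endpoints, so the boundary terms vanish and
\[
R(a,b)=\alpha\int_a^b(t_{n,r}-s)^{-\alpha-1}\left((\Pi y)(s)-y(s)\right)ds .
\]
This identity is the object I would really work with: it uses only point values of $y$, hence only \eqref{eqn:DiffusionRegularity}, and the explicit factor $\alpha$ in front will later cancel the $1/\alpha$ that a $(t_{n,r}-s)^{-\alpha-1}$--integral inevitably produces, which is what keeps all constants harmless as $\alpha\to0^+$.

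\noindent From \eqref{eqn:DiffusionRegularity} I would next record the elementary interpolation bounds: on a cell $[a,b]$ with $a>0$, comparing at both endpoints, $|(\Pi y)(s)-y(s)|\le C(1+a^{\alpha-1})\min(s-a,\,b-s)$; and on the first cell $[0,h]$ (with $h=\Delta T$ if $n\ge1$, $h=\Delta t$ if $n=0$), $|(\Pi y)(s)-y(s)|\le C\min\{s^\alpha+h^{\alpha-1}s,\ (1+h^{\alpha-1})(h-s)\}$. Substituting these into the identity yields three workhorse bounds: on a cell bounded away from both endpoints of $[0,t_{n,r}]$,
\[
|R(a,b)|\le C\,(1+a^{\alpha-1})\,(b-a)\left[(t_{n,r}-b)^{-\alpha}-(t_{n,r}-a)^{-\alpha}\right],
\]
whose weight difference telescopes over consecutive cells and in which the $\alpha$ from the integration by parts has cancelled the $1/\alpha$ from $\int(t_{n,r}-s)^{-\alpha-1}ds$; on the last fine cell $[t_{n,r-1},t_{n,r}]$, using $\min(s-a,b-s)\le t_{n,r}-s$, $|R|\le \tfrac{C\alpha}{1-\alpha}(1+t_{n,r-1}^{\alpha-1})(\Delta t)^{1-\alpha}$; and on the first cell, $|R(0,h)|\le C\alpha\int_0^h(t_{n,r}-s)^{-\alpha-1}\min\{s^\alpha+h^{\alpha-1}s,(1+h^{\alpha-1})(h-s)\}\,ds$, to be evaluated separately in each case.

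\noindent The three cases then follow by summation. For $n=0$ the history is empty and all cells have width $\Delta t$. If $r=1$ the single cell $[0,\Delta t]$ carries simultaneously the origin blow-up of $y$ and the weight singularity at $t_{0,1}=\Delta t$, so it is handled head-on: splitting the first-cell integral at $\Delta t/2$ and using the two min-bounds on the respective halves gives, together with the prefactor, an $O(1)$ quantity (the relevant combination $\alpha/\Gamma(2-\alpha)$ is bounded on $(0,1)$). If $r\ge2$, the cell $[0,\Delta t]$ now lies far from $t_{0,r}$ (distance $\ge t_{0,r}/2$) and contributes only $\lesssim t_{0,r}^{-\alpha-1}(\Delta t)^{\alpha+1}\le C(\Delta t)^{1-\alpha}t_{0,r}^{\alpha-1}$, the interior cells telescope, and the last cell, where $y$ is Lipschitz with constant $\lesssim t_{0,r}^{\alpha-1}$, gives exactly $C(\Delta t)^{1-\alpha}t_{0,r}^{\alpha-1}$. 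For $n\ge2$ one has $t_{n,r}\in[T_n,T_{n+1}]$, hence $t_{n,r}\asymp T_n$; the near-origin coarse cell contributes $\lesssim((n-1)\Delta T)^{-\alpha-1}(\Delta T)^{\alpha+1}$, the last coarse cell $[T_{n-1},T_n]$ (only $r\Delta t$ away from $t_{n,r}$) is controlled via $\int_0^{\Delta T}(u+r\Delta t)^{-\alpha-1}u\,du\lesssim(\Delta T)^{1-\alpha}$, the intermediate coarse cells require a genuine weighted summation $\sum_i T_{i-1}^{\alpha-1}[(t_{n,r}-T_i)^{-\alpha}-(t_{n,r}-T_{i-1})^{-\alpha}]\Delta T$, estimated through a mean-value bound on the weight difference and the split of $\int_{\Delta T}^{T_n}\tau^{\alpha-1}(t_{n,r}-\tau)^{-\alpha-1}d\tau$ at $t_{n,r}/2$ (yielding $\lesssim\Delta T\,T_n^{-1}+\Delta T\,T_n^{\alpha-1}(r\Delta t)^{-\alpha}$), and the fine cells over $[T_n,t_{n,r}]$ give $\lesssim(1+T_n^{\alpha-1})(\Delta t)^{1-\alpha}$; rewriting through $\Delta T=M\Delta t$, $T_n=nM\Delta t$ and using $n\le T/\Delta T$, every piece is dominated by $C(\Delta t)^{1-\alpha}T_n^{\alpha-1}M^{\alpha+1}$. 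Finally, for $n=1$ one has $t_{1,r}\in[\Delta T,2\Delta T]$, so the single history cell $[0,\Delta T]$ has its right endpoint at distance only $r\Delta t$ from $t_{1,r}$; substituting $s=\Delta T\sigma$ and $t_{1,r}=\Delta T(1+r/M)$ reduces $|R(0,\Delta T)|$ to $C\alpha\int_0^1(1+r/M-\sigma)^{-\alpha-1}(\sigma^\alpha+\sigma)\,d\sigma\le C\alpha\cdot\tfrac1\alpha(r/M)^{-\alpha}\le CM^\alpha$, which with the prefactor is exactly $CM^\alpha/\Gamma(1-\alpha)$, while the fine cells over $[\Delta T,t_{1,r}]\subset[\Delta T,2\Delta T]$ add only $O\left((\Delta t)^{1-\alpha}+M^{\alpha-1}\right)$, absorbed into the same bound.

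\noindent The main obstacle is the case $n=1$ (and, less acutely, $n=0,\ r=1$): there is no separation of scales between the $t^{\alpha-1}$ blow-up of $y'$ at the origin and the near-coincidence of the coarse node $T_1$ with the evaluation point $t_{1,r}$, so instead of extracting smallness one must show that the unavoidable $O(1)\cdot(M/r)^\alpha$ growth is captured precisely by $M^\alpha/\Gamma(1-\alpha)$; getting the clean constant $1/\Gamma(1-\alpha)$ rather than a cruder $1/\alpha$ is exactly why the integration-by-parts form, which carries the compensating factor $\alpha$, is the right object to start from. A subordinate difficulty is bookkeeping: several series in the summation ($\sum_m m^{-\alpha-1}$ and the $(t_{n,r}-s)^{-\alpha-1}$--integrals) behave like $1/\alpha$ as $\alpha\to0^+$, and one must check that each is always paired with that compensating $\alpha$ and, together with the uniform bound $1/\Gamma(1-\alpha)\le1$ on $(0,1)$, that the constant $C$ can genuinely be chosen independent of $\alpha$.
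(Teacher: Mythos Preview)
Your approach is essentially the same as the paper's: both start from the observation that $\delta^\alpha y$ is the Caputo derivative of the piecewise linear interpolant, integrate by parts to obtain an integral of the interpolation error against the kernel $(t_{n,r}-s)^{-\alpha-1}$ with the compensating factor $\alpha$ in front, insert the pointwise interpolation bound coming from \eqref{eqn:DiffusionRegularity}, and then treat the cases $n=0$, $n=1$, $n\ge 2$ separately by splitting off the cells adjacent to the two singularities. The only notable organisational difference is that you package the intermediate cells through a telescoping weight difference $[(t_{n,r}-b)^{-\alpha}-(t_{n,r}-a)^{-\alpha}]$, whereas the paper keeps the product bound $(s-T_i)(T_{i+1}-s)$ and integrates directly; your telescoping is marginally cleaner and makes the $\alpha$--uniformity of the constants more transparent, but both routes lead to the same estimates.
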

\begin{proof}
	Let $I_{\Delta T}$ and $I_{\Delta t}$ be the interpolation operator on the $\Delta T$ and $\Delta t$ grids, respectively. It is easy to obtain the classical interpolation error estimate
	\begin{equation}\label{eqn:InterpolationError}
		|I_{\Delta T} y(t) - y(t)| \leq \frac{1}{\Delta T} \left|(t-T_i) (y(T_{i+1})-y(t)) + (T_{i+1}-t)(y(T_i)-y(t))\right|, \quad t \in [T_i, T_{i+1}], \quad 0\leq i \leq N-1, 
	\end{equation} 
	and similarly for $I_{\Delta t}$. Now, with a suitable integration by parts in the definition of the Caputo derivative \eqref{eqn:Caputo} we can show that
	\begin{equation}
		\partial^{\alpha}_t y(t) =\frac{1}{\Gamma(1-\alpha)}\frac{y(t)-y(0)}{t^{\alpha}}+\frac{\alpha}{\Gamma(1-\alpha)}\int_0^t \frac{y(t)-y(s)}{(t-s)^{1+\alpha}} ds.
	\end{equation}
	Since, by construction, the L1 scheme is the Caputo derivative applied to the linear interpolant we have
	\begin{equation}
		|\partial^\alpha_t y(t_{n,r}) - \delta^\alpha y(t_{n,r})| \leq \frac{\alpha}{\Gamma(1-\alpha)} \left[\sum_{i=0}^{n-1} \int_{T_i}^{T_{i+1}} \frac{|I_{\Delta T}y(s) - y(s)|}{(t_{n,r}-s)^{1+\alpha}} ds + \sum_{j=0}^{r-1} \int_{t_{n,j}}^{t_{n,j+1}} \frac{|I_{\Delta t}y(s) - y(s)|}{(t_{n,r}-s)^{1+\alpha}} ds\right],
	\end{equation}
	where, as in the construction of the fine integrator \eqref{eqn:L1Fine}, we have separated the history part and the current interval. Now, we have to bound each term. First, by the interpolation error estimate \eqref{eqn:InterpolationError} and the regularity assumption \eqref{eqn:DiffusionRegularity}, we obtain the following
	\begin{equation}\label{eqn:TruncationGeneralError}
		\begin{split}
			|\partial^\alpha_t y(t_{n,r}) - \delta^\alpha y(t_{n,r})| 
			&\leq C\frac{\alpha}{\Gamma(1-\alpha)} \left[(\Delta T)^{-1}\sum_{i=0}^{n-1} \int_{T_i}^{T_{i+1}} (1+s^{\alpha-1})(s-T_i)(T_{i+1}-s)(t_{n,r}-s)^{-1-\alpha} ds \right. \\
			&\left.+ (\Delta t)^{-1}\sum_{j=0}^{r-1} \int_{t_{n,j}}^{t_{n,j+1}} (1+s^{\alpha-1})(s-t_{n,j})(t_{n,j+1}-s)(t_{n,r}-s)^{-1-\alpha} ds\right],
		\end{split}
	\end{equation}
	since $T_{i+1}^{\alpha-1} \leq s^{\alpha-1}$ on the respective interval (and similarly for $t_{n,r}$). 
	
	\medskip\noindent\textbf{The cases $\mathbf{n=0,1}$.} Notice that for $n=0$ the operator $\delta^\alpha$ is the same as the $\Delta t$-L1 scheme in the interval $[0, T_1]$ and the estimate of its error has been obtained in \cite{del2025numerical}. Next, when $n=1$ the history part consists of only the first coarse interval $[0, T_1]$, hence
	\begin{equation}\label{eqn:TruncationN1}
		\begin{split}
			|\partial^\alpha_t y(t_{1,r}) - \delta^\alpha y(t_{1,r})| 
			&\leq C\frac{\alpha}{\Gamma(1-\alpha)} \left[(\Delta T)^{-1}\int_0^{T_1} (1+s^{\alpha-1})s(T_1-s)(t_{1,r}-s)^{-1-\alpha} ds \right.\\
			&\left.+ (\Delta t)^{-1}\sum_{j=0}^{r-1} \int_{t_{1,j}}^{t_{1,j+1}} (1+s^{\alpha-1})(s-t_{1,j})(t_{1,j+1}-s)(t_{1,r}-s)^{-1-\alpha} ds \right]
		\end{split}
	\end{equation}
	The first integral is bounded since $s^{\alpha-1} > 1$ and
	\begin{equation}
		\begin{split}
			(\Delta T)^{-1}&\int_0^{T_1} (1+s^{\alpha-1})s(T_1-s)(t_{1,r}-s)^{-1-\alpha} ds \\
			&\leq 2 \int_0^{T_1} s^\alpha (t_{1,r} - s)^{-1-\alpha} ds \leq \frac{2}{\alpha} (\Delta T)^{\alpha} \left((r \Delta t)^{-\alpha} - \left(\Delta T + r \Delta t\right)^{-\alpha}\right) \leq \frac{2}{\alpha} M^{\alpha} (\Delta t)^{\alpha} t_{0,r}^{-\alpha}.
		\end{split}
	\end{equation}
	For the second term, the sum in \eqref{eqn:TruncationN1}, we first consider the case with $r=1$, for which we have
	\begin{equation}
		\begin{split}
			(\Delta t)^{-1}&\int_{\Delta T}^{\Delta T + \Delta t} (1+s^{\alpha-1})(s-\Delta T)(\Delta T + \Delta t-s)(\Delta T + \Delta t-s)^{-1-\alpha} ds \\
			&\leq 2 \int_{\Delta T}^{\Delta T + \Delta t} s^{\alpha-1}(\Delta T + \Delta t-s)^{-\alpha} ds \leq \frac{2}{1-\alpha} (\Delta T)^{\alpha-1} (\Delta t)^{1-\alpha} = \frac{2 M^{\alpha-1}}{1-\alpha}. 
		\end{split}
	\end{equation}
	Now, if $1 < r \leq M$, we further estimate by splitting the last term in the sum in \eqref{eqn:TruncationN1}. The first $r-1$ integrals can be joined after bounding $(s-t_{n,j})(t_{n,j+1}-s) \leq (\Delta t)^2$
	\begin{equation}
		\begin{split}
			(\Delta t)^{-1}&\sum_{j=0}^{r-1} \int_{t_{1,j}}^{t_{1,j+1}} (1+s^{\alpha-1})(s-t_{1,j})(t_{1,j+1}-s)(t_{1,r}-s)^{-1-\alpha} ds \\
			&\leq \Delta t \int_{t_{1,0}}^{t_{1,r-1}} (1+s^{\alpha-1}) (t_{1,r} - s)^{-1-\alpha} ds + \int_{t_{n,r-1}}^{t_{1,r}} (1+s^{\alpha-1}) (t_{1,r} - s)^{-\alpha} ds,
		\end{split}
	\end{equation}
	which removes the non-integrable singularity of the integrand. Further, by estimating $1+s^{\alpha-1}\leq 2 s^{\alpha-1}$ and evaluating integrals, we obtain
	\begin{equation}
		\begin{split}
			(\Delta t)^{-1}&\sum_{j=0}^{r-1} \int_{t_{1,j}}^{t_{1,j+1}} (1+s^{\alpha-1})(s-t_{1,j})(t_{1,j+1}-s)(t_{1,r}-s)^{-1-\alpha} ds \\
			&\leq 2(\Delta t) (\Delta T)^{\alpha-1} \frac{1}{\alpha} \left((\Delta t)^{-\alpha} - (r \Delta t)^{-\alpha}\right) + 2(\Delta T + (r-1) \Delta t)^{\alpha-1} \frac{1}{1-\alpha} (\Delta t)^{1-\alpha} \\
			&\leq \frac{2 M^{\alpha-1}}{\alpha} + \frac{2}{1-\alpha} (M+r-1)^{\alpha-1} \leq \frac{2 M^{\alpha-1}}{\alpha(1-\alpha)}. 
		\end{split}
	\end{equation}
	This concludes the case $n=0,1$.
	
	\medskip\noindent\textbf{The case $\mathbf{n>1}$.} Now, we go back to \eqref{eqn:TruncationGeneralError} and first estimate the history. Splitting the term $i=0$, since there is a singularity, using the estimate $s^{\alpha-1} \geq 1$, and summing all over the remaining subintervals, we obtain the following
	\begin{equation}
		\begin{split}
			(\Delta T)^{-1}&\sum_{i=0}^{n-1} \int_{T_i}^{T_{i+1}} (1+s^{\alpha-1})(s-T_i)(T_{i+1}-s)(t_{n,r}-s)^{-1-\alpha} ds \\
			&\leq 2(\Delta T)^{-1} \int_0^{\Delta T} s^\alpha (\Delta T -s)(t_{n,r}-s)^{-1-\alpha} ds + \Delta T\int_{\Delta T}^{T_n}(1+s^{\alpha-1})(t_{n,r}-s)^{-1-\alpha}ds,
		\end{split}
	\end{equation}
	which further can be estimated by evaluating the integral
	\begin{equation}
		\begin{split}
			(\Delta T)^{-1}&\sum_{i=0}^{n-1} \int_{T_i}^{T_{i+1}} (1+s^{\alpha-1})(s-T_i)(T_{i+1}-s)(t_{n,r}-s)^{-1-\alpha} ds \\
			&\leq \frac{2}{1+\alpha} M^{\alpha+1}(\Delta t)^{\alpha+1} t_{n-1,r}^{-\alpha-1} + \Delta T\int_{\Delta T}^{T_n}(1+s^{\alpha-1})(t_{n,r}-s)^{-1-\alpha}ds = I_1 + I_2.
		\end{split}
	\end{equation}
	The estimate for $I_1$ is straightforward since $\Delta t < 1$ and $t_{n-1,r}^{-\alpha-1} \leq T_{n-1}^{-\alpha-1} \leq T^{\alpha-1}_{n-1}$, where the last inequality holds for $n\geq 2$. That is,  
	\begin{equation}
		I_1 \leq \frac{2}{1+\alpha} M^{\alpha+1}(\Delta t)^{1-\alpha} T_{n-1}^{\alpha-1}. 
	\end{equation}
	In turn, $I_2$ can be estimated by splitting the integrand and substituting $s=t_{n,r}w$ in the second term
	\begin{equation}
		\begin{split}
			I_2 &\leq \frac{\Delta T}{\alpha} \left(t_{n-1,r}^{-\alpha} - t_{n,r}^{-\alpha}\right) + (\Delta T) t_{n,r}^{-1}\int_{\frac{1}{n+r/M}}^{1-\frac{1}{1+nM/r}} w^{\alpha-1} (1-w)^{-\alpha-1} dw \\
			&\leq \frac{M^{\alpha}}{\alpha}(\Delta t) t_{n-1,r}^{-\alpha} + (\Delta T) t_{n,r}^{-1} \left(\int_{\frac{1}{n+r/M}}^{\frac{1}{2}} w^{\alpha-1} (1-w)^{-\alpha-1} dw+\int_{\frac{1}{2}}^{1-\frac{1}{1+nM/r}} w^{\alpha-1} (1-w)^{-\alpha-1} dw\right).
		\end{split}
	\end{equation}
	The splitting in $w = 1/2$ helps to separate two singularities at $w = 0$ and $w = 1$ that are approached asymptotically when $n\rightarrow\infty$. Next, by elementary estimates $(1-w)^{-\alpha-1}\leq 2^{\alpha+1}$ in the first and $w^{\alpha-1} \leq 2^{\alpha-1}$ in the second integral, we obtain
	\begin{equation}
		\begin{split}
			I_2 &\leq \frac{M^{\alpha}}{\alpha}(\Delta t) t_{n-1,r}^{-\alpha} + (\Delta T) t_{n,r}^{-1} \left(\frac{2^{\alpha+1}}{\alpha} \left(2^{-\alpha} - \left(n+\frac{r}{M}\right)^{-\alpha}\right)  +\frac{2^{\alpha-1}}{\alpha}\left(\left(1+\frac{nM}{r}\right)^\alpha - 2^\alpha\right)\right).
		\end{split}
	\end{equation}
	Therefore, by neglecting the negative terms, 
	\begin{equation}
		\begin{split}
			I_2 &\leq \frac{C}{\alpha}\left[M^{\alpha}(\Delta t) t_{n-1,r}^{-\alpha} + (\Delta T) t_{n,r}^{-1} \left(1 + \left(1+\frac{nM}{r}\right)^\alpha\right)\right].
		\end{split}
	\end{equation}
	Now, observe that
	\begin{equation}
		\begin{split}
			&(\Delta t)t_{n-1,r}^{-\alpha}
			\leq (\Delta t)^{1-\alpha} M^{-\alpha}, \\ 
			&(\Delta T) t_{n,r}^{-1} = \frac{\Delta T}{n\Delta T + r \Delta t} \leq n^{-1} = (\Delta t)^{1-\alpha} (n\Delta t)^{\alpha-1} n^{-\alpha} \leq (\Delta t)^{1-\alpha} T_n^{\alpha-1} M^{\alpha-1}, \\
			&(\Delta T) t_{n,r}^{-1} \left(1+\frac{nM}{r}\right)^\alpha \leq n^{\alpha-1} \left(n^{-1}+\frac{M}{r}\right)^\alpha \leq (\Delta t)^{1-\alpha} T_n^{\alpha-1} M^{1-\alpha} (1+M)^\alpha.
		\end{split}
	\end{equation}
	Therefore, we have a bound for $I_2$ as follows
	\begin{equation}
		I_2 \leq \frac{C}{\alpha}(\Delta t)^{1-\alpha} \left[1+T_n^{\alpha-1}\left(M^{\alpha-1} + M^{1-\alpha} (1+M)^\alpha\right) \right] \leq \frac{C}{\alpha}(\Delta t)^{1-\alpha}T_n^{\alpha-1}\left[T^{1-\alpha}+M^{\alpha-1} + M^{1-\alpha} (1+M)^\alpha\right].
	\end{equation}
	Finally, gathering the $I_1$ and $I_2$, the history term can be estimated 
	\begin{equation}
		\begin{split}
			(\Delta T)^{-1}&\sum_{i=0}^{n-1} \int_{T_i}^{T_{i+1}} (1+s^{\alpha-1})(s-T_i)(T_{i+1}-s)(t_{n,r}-s)^{-1-\alpha} ds \\
			&\leq C (\Delta t)^{1-\alpha}T_{n-1}^{\alpha-1}\left[M^{\alpha+1} + \frac{1}{\alpha}\left(T^{1-\alpha}+M^{\alpha-1} + M^{1-\alpha} (1+M)^\alpha\right)\right] \leq C (\Delta t)^{1-\alpha}T_{n-1}^{\alpha-1} \frac{M^{\alpha+1}}{\alpha}.
		\end{split}
	\end{equation}
	Now, the error in the latest interval in \eqref{eqn:TruncationGeneralError} can be estimated by, again, splitting the last integral and appropriately bounding the node-related terms $(s-t_{n,j})(t_{n,j+1}-s)$
	\begin{equation}
		\begin{split}
			(\Delta t)^{-1}&\sum_{j=0}^{r-1} \int_{t_{n,j}}^{t_{n,j+1}} (1+s^{\alpha-1})(s-t_{n,j})(t_{n,j+1}-s)(t_{n,r}-s)^{-1-\alpha} ds \\
			&\leq (\Delta t) \sum_{j=0}^{r-2} \int_{t_{n,j}}^{t_{n,j+1}}(1+s^{\alpha-1})(t_{n,r}-s)^{-1-\alpha} ds + \int_{t_{n,r-1}}^{t_{n,r}} (1+s^{\alpha-1})(t_{n,r}-s)^{-\alpha}ds.
		\end{split}
	\end{equation}
	We can now bound $(1+s^{\alpha-1})^{\alpha-1}$ from the above and evaluate the sum and integrals
	\begin{equation}
		\begin{split}
			(\Delta t)^{-1}&\sum_{j=0}^{r-1} \int_{t_{n,j}}^{t_{n,j+1}} (1+s^{\alpha-1})(s-t_{n,j})(t_{n,j+1}-s)(t_{n,r}-s)^{-1-\alpha} ds \\
			&\leq (1+T_n^{\alpha-1}) \left[\frac{\Delta t}{\alpha} \left((\Delta t)^{-\alpha} - (r\Delta t)^{-\alpha}\right) + \frac{1}{1-\alpha} (\Delta t)^{1-\alpha}\right] \leq \frac{C}{\alpha(1-\alpha)} (\Delta t)^{1-\alpha} T_n^{\alpha-1}.
		\end{split}
	\end{equation}
	This last estimate completes the proof. 
\end{proof}

\begin{rem}\label{rem:Regularity}
	From the above proof it is easy to see that if the function $y=y(t)$ is $C^2[0,T]$ we would obtain a discretisation error of order $(\Delta t)^{2-\alpha}$ which is the best possible. However, such a smoothness degree of solutions to the subdiffusion equation can only happen when the coefficients and the data are appropriately compatible (see \cite{Sak11, Sty16}).  
\end{rem}

\begin{rem}
	From Proposition \ref{prop:L1Hybrid} we see the typical feature of the L1 scheme: the pointwise error keeps a constant order (here, under our regularity assumption equal to $1-\alpha$) away from the origin. However, the error deteriorates close to $t = 0$, as was previously observed in the standard L1 discretisation in \cite{stynes2017error}. In that work, the authors obtained a better order of convergence under stronger regularity assumptions. 
\end{rem}

\subsection{Convergence of the parareal algorithm}

Before we proceed to the main result concerning convergence, we state the important lemma of discrete analysis. This can be thought of as a more complex version of the standard discrete Gronwall inequality. 

\begin{lem}\label{lem:Gronwall}
	Let $\left\{E^k_n\right\}_{n,k \in \mathbb{N}}$ be a positive sequence. If
	\begin{equation}
		E^{k+1}_{n+1} \leq a + b E^k_n + c E^{k+1}_{n}, \quad E^k_0 = 0, 
	\end{equation}
	for $a,b,c > 0$, then
	\begin{equation}\label{eqn:LemmaInequality}
		E^k_n \leq a \sum_{j=0}^{k-1} \sum_{i=0}^{n-j-1} \binom{i+j}{j} c^i b^j + E^0_n b^k \sum_{j=k}^{n-1} \binom{j-1}{k-1} c^{j-k}.
	\end{equation}
\end{lem}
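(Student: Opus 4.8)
The plan is to prove the inequality \eqref{eqn:LemmaInequality} by a double induction, unrolling the recursion in $n$ and $k$ and tracking the combinatorial coefficients that arise. First I would treat the two contributions separately: the "source" term generated by $a$ at every application of the recursion, and the "initial data" term carried by $E^0_n$ through the $b E^k_n$ channel. Since $E^k_0 = 0$ for all $k$, the base cases are clean: for $n=0$ the right-hand side is an empty sum and the left-hand side vanishes; for $k=0$ one can either take $E^0_n$ as given data (so the first sum is empty after reinterpreting the range, or one checks the inequality degenerates appropriately) or start the induction at $k=1$.

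The main step is the inductive one. Assuming \eqref{eqn:LemmaInequality} holds for the pair $(k, n)$ and for $(k+1, n)$, I would substitute these bounds into $E^{k+1}_{n+1} \leq a + b E^k_n + c E^{k+1}_n$ and collect terms. The term $a$ contributes the "diagonal" $i=0$, $j=0$ entry; the factor $b$ applied to the $(k,n)$ bound shifts the $j$-index up by one and multiplies by $b$; the factor $c$ applied to the $(k+1,n)$ bound shifts the $i$-index up by one and multiplies by $c$. The heart of the argument is then the Pascal-type identity
\begin{equation}
	\binom{i+j}{j} = \binom{i+j-1}{j-1} + \binom{i+j-1}{j},
\end{equation}
which shows that the $b$-shifted and $c$-shifted families of binomial coefficients, together with the isolated $a$ term, recombine exactly into the family $\binom{i+j}{j} c^i b^j$ with the summation ranges $0 \le j \le k$, $0 \le i \le n-j$ required at level $(k+1, n+1)$. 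The same identity handles the $E^0_n$ term: the coefficients $\binom{j-1}{k-1}$ at two consecutive values of $k$ combine via $\binom{j-1}{k} = \binom{j-2}{k-1} + \binom{j-2}{k}$ (after the appropriate index shift coming from the $b$-channel) to give $\binom{j-1}{k}$ at level $k+1$, with the power $b^{k+1}$ absorbed from the leading $b$. Care is needed with the upper and lower summation limits so that the boundary binomial coefficients that fall outside the nominal range are genuinely zero; this bookkeeping, rather than any deep idea, is where the argument is most error-prone.

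I expect the main obstacle to be precisely this index-shifting bookkeeping: making sure that when $b E^k_n$ is expanded, the $j$-range becomes $1 \le j \le k+1$ while the $c E^{k+1}_n$ expansion keeps $0 \le j \le k+1$ but shifts $i$, and that the two ranges overlap correctly so that Pascal's rule applies term by term without leftover boundary terms. An alternative, cleaner route is to recognize the right-hand side as the solution of the associated \emph{equality} recursion and argue by a straightforward comparison/monotonicity principle: define $F^k_n$ by $F^{k+1}_{n+1} = a + b F^k_n + c F^{k+1}_n$, $F^k_0 = 0$, $F^0_n = E^0_n$, prove by a one-line induction that $E^k_n \le F^k_n$, and then verify that the closed form in \eqref{eqn:LemmaInequality} solves the equality recursion — the verification still reduces to the same Pascal identities, but the logical structure is simpler. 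Either way, the only analytic input is nonnegativity of $a, b, c$ and of the sequence, which guarantees the inductive inequalities propagate in the right direction.
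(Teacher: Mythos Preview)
Your proposal is correct, and your ``alternative, cleaner route'' is precisely the approach the paper takes: it defines an auxiliary sequence $f^k_n$ satisfying the \emph{equality} recursion with $f^k_0=0$, $f^0_n=E^0_n$, proves $E^k_n\le f^k_n$ by a one-line induction, then splits $f^k_n$ by linearity into a homogeneous part (carrying $E^0_n$) and a nonhomogeneous part (driven by $a$), verifying each closed form by direct substitution and Pascal's identity $\binom{i+j-1}{j-1}+\binom{i+j-1}{j}=\binom{i+j}{j}$ exactly as you outline. Your primary direct-induction route would also work and uses the same combinatorial identity; the comparison-principle packaging just separates the monotonicity step from the algebraic verification.
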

\begin{proof}
	The proof proceeds in three steps:
	\begin{enumerate}
		\item Formulate auxiliary \emph{equation}.
		\item Solve the homogeneous equation.
		\item Solve the non-homogeneous problem.
	\end{enumerate}
	\medskip\noindent\textbf{Ad. 1.} First, define $\{f^k_n\}_{n,k\in\mathbb{N}}$ to be the solution of the following equation
	\begin{equation}
		f^{k+1}_{n+1} = a + b f^k_n + c f^{k+1}_{n}, \quad f^k_0 = 0, \quad f^0_n = E^0_n.
	\end{equation}
	We will show by induction that $E^k_n \leq f^k_n$. This shows that it is sufficient only to solve the equation. By the definition of the initial values $f_0^k$ and $f^0_n$ we see that the first step in the induction is satisfied. Now, assume that $E^i_j \leq f^i_j$ for all $0\leq i \leq k+1$ and $0\leq n \leq n$. By this assumption and the definition of $E^{k+1}_{n+1}$, we have
	\begin{equation}
		E^{k+1}_{n+1} \leq a + b E^k_n + c E^{k+1}_n \leq a + b f^{k}_n + c f^{k+1}_n = f^{k+1}_{n+1}, 
	\end{equation}
	and the induction is complete. 
	
	\medskip\noindent\textbf{Ad. 2.} We proceed to solving the main equation. By linearity, we can first consider the homogeneous version
	\begin{equation}
		f^{k+1}_{n+1} = b f^k_n + c f^{k+1}_{n}, \quad f^k_0 = 0, \quad f^0_n = E^0_n.
	\end{equation}
	We will that
	\begin{equation}
		f^k_n = E^0_n b^k \sum_{j=k}^{n-1} \binom{j-1}{k-1} c^{j-k}.
	\end{equation}
	This formula can be found by several techniques: "guessing" based on numerical computations, using generating functions, or combinatorial counting argument. To verify the above, we have to check whether the ansatz satisfies the equation. The "initial condition" is 
	\begin{equation}
		f^0_n = E^0_n \binom{-1}{-1} = E^0_n,
	\end{equation}
	since all other binomial coefficients vanish. On the other hand, the right-hand side of the homogeneous equation is
	\begin{equation}
		\begin{split}
			b f^k_n + c f^{k+1}_{n} 
			&= E^0_n \left( b^{k+1} \sum_{j=k}^{n-1} \binom{j-1}{k-1} c^{j-k} + b^{k+1} \sum_{j=k+1}^{n-1} \binom{j-1}{k} c^{j-k-1+1}\right) \\
			&= E^0_n b^{k+1} \left( \sum_{j=k}^{n-1} \binom{j-1}{k-1} c^{j-k} + \sum_{j=k+1}^{n-1} \binom{j-1}{k} c^{j-k}\right).
		\end{split}
	\end{equation} 
	Now, we separate the first term in the first sum and merge
	\begin{equation}
		b f^k_n + c f^{k+1}_{n} = E^0_n b^{k+1} \left(1+ \sum_{j=k+1}^{n-1} \left[\binom{j-1}{k-1} + \binom{j-1}{k}\right]c^{j-k}\right) = E^0_n b^{k+1} \left(1+ \sum_{j=k+1}^{n-1}\binom{j}{k} c^{j-k}\right),
	\end{equation} 
	where we used the fundamental recurrence identify for the binomial coefficient. Finally, we change the summation index $i = j+1$ and obtain
	\begin{equation}
		b f^k_n + c f^{k+1}_{n} = E^0_n b^{k+1} \left(1+ \sum_{i=k+2}^{n-1}\binom{i-1}{k} c^{i-k-1}\right) = E^0_n b^{k+1} \sum_{i=k+1}^{n-1}\binom{i-1}{k} c^{i-k-1} = f^{k+1}_{n+1},
	\end{equation} 
	which concludes the proof of this part. 
	
	\medskip\noindent\textbf{Ad. 3.} Now, we turn to the nonhomogeneous problem with vanishing initial condition
	\begin{equation}
		f^{k+1}_{n+1} = a + b f^k_n + c f^{k+1}_{n}, \quad f^k_0 = 0, \quad f^0_n = 0.
	\end{equation}
	We claim that
	\begin{equation}
		f^k_n = a \sum_{j=0}^{k-1} \sum_{i=0}^{n-j-1} \binom{i+j}{j} c^i b^j.
	\end{equation}
	First, we see that by the convention in which the upper index of the summation is smaller than the lower, we have $f^0_n = 0$ and $f^k_0 = 0$. Moreover, the ansatz satisfies the equation as can be seen by changing the summation indices
	\begin{equation}
		\begin{split}
			a + b f^k_n + c f^{k+1}_{n} 
			&= a\left(1+\sum_{j=0}^{k-1} \sum_{i=0}^{n-j} \binom{i+j}{j} c^i b^{j+1} + \sum_{j=0}^{k} \sum_{i=0}^{n-j} \binom{i+j}{j} c^{i+1} b^j\right) \\
			&= a\left(1+\sum_{j=1}^{k} \sum_{i=0}^{n-j} \binom{i+j-1}{j-1} c^i b^{j} + \sum_{j=0}^{k} \sum_{i=1}^{n-j} \binom{i+j-1}{j} c^{i} b^j\right).
		\end{split}
	\end{equation}
	Then, by noticing that since $\binom{p}{q} = 0$ if $q > p$, all the $(i,j)$-th terms of the above sums vanish apart the one with $i = j = 0$ with a value $1$, we can artificially add and subtract it
	\begin{equation}
		\begin{split}
			a + b f^k_n + c f^{k+1}_{n} 
			&= a\left(1-1+\sum_{j=0}^{k} \sum_{i=0}^{n-j} \binom{i+j-1}{j-1} c^i b^{j} + \sum_{j=0}^{k} \sum_{i=0}^{n-j} \binom{i+j-1}{j} c^{i} b^j\right) \\
			&= a\sum_{j=0}^{k} \sum_{i=0}^{n-j} \left[\binom{i+j-1}{j-1} + \binom{i+j-1}{j}  \right] c^i b^{j} = a\sum_{j=0}^{k} \sum_{i=0}^{n-j} \binom{i+j}{j} c^i b^{j} = f^{k+1}_{n+1},
		\end{split}
	\end{equation}
	which completes the proof of the lemma.
\end{proof}

\begin{rem}
	Notice that for all $k \geq n$ we have 
	\begin{equation}
		E^k_n \leq a \sum_{j=0}^{n-1} \sum_{i=0}^{n-j-1} \binom{i+j}{j} c^i b^j, \quad k \geq n,
	\end{equation}
	that is, the bound saturates at a $k$-independent value. This follows from the fact that in \eqref{eqn:LemmaInequality} the terms $i \geq n$ in the $a$-sum vanish. Similarly, $\binom{j-1}{k-1} = 0$ for $k\geq j$ in the $E_n^0$-sum.  
\end{rem}

Now, we are able to proceed to the main result concerning convergence of the parareal algorithm.

\begin{thm}\label{thm:Convergence}
	Let $u=u(x,t)$ be the solution of \eqref{eqn:MainPDEWeak} with regularity \eqref{eqn:DiffusionRegularity} in time and $H^1_0(\Omega)$ in space. $U^k_n$ is the parareal approximation at $t=T_n$ with $k$ iterations obtained by \eqref{eqn:Parareal} and $N$ degrees of freedom of the Galerkin approximation. Assume that the parareal method uses coarse $\mathcal{G}$ and fine $\mathcal{F}$ integrators satisfying the following Lipschitz condition 
	\begin{equation}
		\begin{split}
			\|\mathcal{G}_n(V_{0:n}) - \mathcal{G}_n(W_{0:n})\| &\leq C_\mathcal{G}(\Delta(T)) \max_{0\leq j\leq n} \|V_j - W_j\|, \\ 
			\|\mathcal{F}_n(V_{0:n}) - \mathcal{F}_n(W_{0:n})\| &\leq C_\mathcal{F}(\Delta(T),\Delta t) \max_{0\leq j\leq n} \|V_j - W_j\|,
		\end{split}
	\end{equation}
	for arbitrary sequences of $L^2$ functions $\left\{V_j\right\}_j$ and $\left\{W_j\right\}_j$ with Lipschitz constants $C_\mathcal{G}, C_\mathcal{F} > 1$. By $\mathcal{G}_n(U_{0:n, N})$ and $\mathcal{F}_n(U_{0:n, N})$ we denote the sequential approximations obtained by these propagators. Then, 
	\begin{equation}\label{eqn:PararealError}
		\begin{split}
			\|u(T_n) - U^k_{n,N}\| \leq \frac{a}{c-1} \left(c(b+c)^{\min\{k,n\}-1} - (b+1)^{\min\{k,n\}-1}\right) &\max_{0\leq j\leq N_t}\|u(T_{j+1}) - \mathcal{F}_j(U_{0:j, N})\| \\
			+b^k c^{n-k-1} \binom{n-1}{k} &\max_{0\leq j\leq N_t}\|u(T_{j+1}) - \mathcal{G}_j(U_{0:j, N})\|,
		\end{split}
	\end{equation}
	where
	\begin{equation}
		a := (1+ C_\mathcal{F}(\Delta(T),\Delta t)), \quad b := C_\mathcal{F}(\Delta(T),\Delta t)+C_\mathcal{G}(\Delta(T)), \quad c := C_\mathcal{G}(\Delta(T)).
	\end{equation}
\end{thm}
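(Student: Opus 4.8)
The plan is to define the parareal error at the coarse nodes, derive a recursive inequality of the type handled by Lemma~\ref{lem:Gronwall}, and then simplify the resulting double binomial sums into the closed form in \eqref{eqn:PararealError}. First I would introduce the error $E^k_n := \|u(T_n) - U^k_{n,N}\|$ and split it via the triangle inequality as $E^k_n \leq \|u(T_n) - \mathcal{F}_{n-1}(U_{0:n-1,N})\| + \|\mathcal{F}_{n-1}(U_{0:n-1,N}) - U^k_{n,N}\|$, so that the first piece is the (serial) fine consistency error, bounded by $\max_j \|u(T_{j+1}) - \mathcal{F}_j(U_{0:j,N})\|$, and it only remains to control how far the parareal iterate is from the serial fine solution. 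Actually, since \eqref{eqn:PararealConvergence} shows the parareal fixed point is exactly the serial fine solution, it is cleaner to track $e^k_n := \|U^*_n - U^k_n\|$ where $U^*$ solves $U^*_{n+1} = \mathcal{F}_{n,M}(U^*_{n,0:M};U^*_{0:n})$, and separately account for $\|u(T_n) - U^*_{n,N}\|$, which is the serial fine error.

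The core step is the recursion. Subtracting the parareal update \eqref{eqn:Parareal} from its fixed-point version \eqref{eqn:PararealConvergence} gives
\begin{equation}
	U^*_{n+1} - U^{k+1}_{n+1} = \left(\mathcal{G}(U^*_n) - \mathcal{G}(U^{k+1}_n)\right) + \left(\mathcal{F}_{n,M}(U^*_{n,0:M};U^*_{0:n}) - \mathcal{F}_{n,M}(U^k_{n,0:M};U^k_{0:n})\right) - \left(\mathcal{G}(U^*_n) - \mathcal{G}(U^k_n)\right).
\end{equation}
Here the dangerous-looking first and third terms do not cancel because one is at iteration $k+1$ and the other at $k$; applying the $\mathcal{G}$- and $\mathcal{F}$-Lipschitz hypotheses termwise and taking norms yields $e^{k+1}_{n+1} \leq C_\mathcal{F} \max_{j\leq n} e^k_j + 2 C_\mathcal{G} \max_{j\leq n} e^{\text{something}}_j$, and one has to be careful to combine the $\mathcal{F}$-history contribution and the two $\mathcal{G}$-terms into exactly the shape $E^{k+1}_{n+1} \leq a + b E^k_n + c E^{k+1}_n$ with $a,b,c$ as defined, where $a$ absorbs the fine/coarse consistency errors fed in as the inhomogeneous term and $b = C_\mathcal{F} + C_\mathcal{G}$, $c = C_\mathcal{G}$. (One should pass to maxima over $n$ at each fixed iteration level to replace $\max_{j\leq n}e^k_j$ by a genuine $E^k_n$ that is monotone in $n$, matching the hypothesis of the lemma.) Then Lemma~\ref{lem:Gronwall} applies directly, giving $E^k_n$ as the double sum $a\sum_{j=0}^{k-1}\sum_{i=0}^{n-j-1}\binom{i+j}{j}c^i b^j$ plus the $E^0_n b^k \sum_{j=k}^{n-1}\binom{j-1}{k-1}c^{j-k}$ tail.

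The final step is purely algebraic: evaluate the two sums in closed form. The inner sum $\sum_{i=0}^{m}\binom{i+j}{j}c^i$ is a truncated negative-binomial series; using the hockey-stick / finite-geometric-type identity for $\sum_i \binom{i+j}{j}c^i$ and then summing the geometric-like series in $j$ produces the combination $c(b+c)^{\min\{k,n\}-1} - (b+1)^{\min\{k,n\}-1}$ over $c-1$, after recognizing $b+c = C_\mathcal{F}+2C_\mathcal{G}$ appears from collecting $(c + \text{stuff})$ and that $b+1 = C_\mathcal{F}+C_\mathcal{G}+1$ appears from the boundary of the truncation; the $\min\{k,n\}$ enters because (as in the remark after the lemma) the sum saturates once $k\geq n$. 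The $E^0_n$-tail collapses to a single binomial coefficient $\binom{n-1}{k}$ times $b^k c^{n-k-1}$ by reindexing $\sum_{j=k}^{n-1}\binom{j-1}{k-1}c^{j-k}$ — again a hockey-stick identity — and $E^0_n$ itself is bounded, through the coarse-initialization sweep $U^0_{n+1} = \mathcal{G}_n(U^0_{0:n})$ and a short induction using $C_\mathcal{G}$-Lipschitzness, by a constant multiple of $\max_j \|u(T_{j+1}) - \mathcal{G}_j(U_{0:j,N})\|$. I expect the main obstacle to be bookkeeping in the recursion derivation: getting the history term of the fine propagator and the two mismatched coarse terms to fold cleanly into the single scalar constants $a,b,c$ without losing the $\min\{k,n\}$ saturation, and making sure the maxima over $n$ are taken in an order consistent with Lemma~\ref{lem:Gronwall}'s hypotheses; the closed-form summation is routine once the correct combinatorial identities are identified.
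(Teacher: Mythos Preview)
Your overall architecture---derive a two-index recursion for the error, feed it to Lemma~\ref{lem:Gronwall}, then simplify the binomial sums---is exactly what the paper does. There is, however, one point of genuine confusion in your plan that would prevent you from landing on the stated bound.

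You propose to track $e^k_n := \|U^*_n - U^k_n\|$, the distance to the serial fine solution. With that choice your displayed decomposition is correct, but applying the Lipschitz hypotheses to its three terms gives the \emph{homogeneous} recursion $E^{k+1}_{n+1} \le (C_\mathcal{F}+C_\mathcal{G})E^k_n + C_\mathcal{G} E^{k+1}_n$, i.e.\ $a=0$. Your later statement that ``$a$ absorbs the fine/coarse consistency errors'' is therefore inconsistent with your own decomposition; nothing in $U^*-U^k$ sees the exact solution $u$, so no consistency error can appear as an inhomogeneous forcing. The paper instead tracks $e^k_n := u(T_n) - U^k_{n,N}$ directly and inserts $\pm\mathcal{F}_n(U_{0:n,N})$ (the serial fine step). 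Then $u-\mathcal{F}(U)$ is the fine consistency error, and in bounding $\|\mathcal{F}(U)-\mathcal{F}(U^k)\|$ by $C_\mathcal{F}\max_j\|U_j-U^k_j\|$ one routes through $u$ once more via $\|U_j-U^k_j\|\le\|U_j-u(T_j)\|+\|e^k_j\|$; this is precisely what generates $a=(1+C_\mathcal{F})\cdot(\text{fine error})$. Your $U^*$-tracking route is not wrong---it would in fact yield a tighter first term, just $\|u-U^*\|$ with no large prefactor---but it does not produce the bound \eqref{eqn:PararealError} as written, and your $E^0_n$ would then be $\max_j\|U^*_j-U^0_j\|$, which is not the coarse consistency quantity in the theorem and would require its own separate estimate.

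A smaller point: the ``closed forms'' for the two sums are not obtained by exact summation of truncated negative-binomial series as you suggest. The paper uses two crude monotonicity bounds---$\binom{i+j}{j}\le\binom{n-1}{j}$ in the double sum, then a geometric sum in $i$ followed by the binomial theorem in $j$; and $c^{j-k}\le c^{n-k-1}$ in the single sum, then the hockey-stick identity. These are upper bounds, not identities, which is why the exponent in \eqref{eqn:PararealError} is $\min\{k,n\}-1$ rather than something sharper.
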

\begin{proof}
	Denote the parareal error by $e^{k}_{n} := u(T_{n}) - U_{n,N}^{k}$ and by \eqref{eqn:Parareal} write
	\begin{equation}
		e^{k+1}_{n+1} = u(T_{n+1}) - \mathcal{F}_n(U_{0:n, N}) + \mathcal{F}_n(U_{0:n, N}) - \mathcal{F}_n(U_{0:n, N}^{k}) + \mathcal{G}_n(U_{0:n, N}^{k+1}) - \mathcal{G}_n(U_{0:n, N}^k),
	\end{equation}
	where $U_{n,N}$ is the sequential solution of the fine scheme \eqref{eqn:NumericalSchemeFine}. Take the inner product of the above with $e^{k+1}_{n+1}$ and use the Cauchy-Schwarz inequality
	\begin{equation}
		\|e^{k+1}_{n+1}\|^2 \leq \left(\|u(T_{n+1}) - \mathcal{F}_n(U_{0:n, N})\| + \|\mathcal{F}_n(U_{0:n, N}) - \mathcal{F}_n(U_{0:n, N}^{k})\| + \|\mathcal{G}_n(U_{0:n, N}^{k+1}) - \mathcal{G}_n(U_{0:n, N}^k)\| \right)\|e^{k+1}_{n+1}\|.
	\end{equation}
	The first term in parentheses is the fine integrator error because $\mathcal{F}_n(U_{0:n, N}) = U_{n+1,N}$. Using the Lipschitz condition for the integrators we further obtain
	\begin{equation}
		\|e^{k+1}_{n+1}\| \leq \|u(T_{n+1}) - \mathcal{F}_n(U_{0:n, N})\| + C_\mathcal{F}(\Delta(T),\Delta t) \max_{0\leq j\leq n} \|U_{j,N} - U_{j,N}^k\| + C_\mathcal{G}(\Delta(T)) \max_{0\leq j\leq n} \|U_{j,N}^{k+1} - U_{j,N}^{k}\|.
	\end{equation}
	Now, we have
	\begin{equation}
		\|U_{j,N} - U_{j,N}^k\| \leq \|U_{j,N} - u(T_j)\| + \|u(T_j) - U_{j,N}^k\| = \|U_{j,N} - u(T_j)\| + \|e^{k}_j\|,
	\end{equation}
	and similarly
	\begin{equation}
		\|U_{j,N}^{k+1} - U_{j,N}^{k}\| \leq \|U_{j,N}^{k+1} - u(T_j)\| + \|u(T_j) - U_{j,N}^{k}\| = \|e^{k+1}_j\| + \|e^k_j\|. 
	\end{equation}
	Define the running maximal error
	\begin{equation}
		E^{k}_n := \max_{0\leq j\leq n} \|e^k_j\|,
	\end{equation}
	which implies the main recurrence inequality
	\begin{equation}
		\begin{split}
			E^{k+1}_{n+1} &\leq (1+ C_\mathcal{F}(\Delta(T),\Delta t))\max_{0\leq j\leq N_t}\|u(T_{j+1}) - \mathcal{F}_j(U_{0:j, N})\| \\
			&+ \left(C_\mathcal{F}(\Delta(T),\Delta t)+C_\mathcal{G}(\Delta(T)) \right) E^{k}_n + C_\mathcal{G}(\Delta(T))  E^{k+1}_n =: a + b E^k_n + c E^{k+1}_n.
		\end{split}
	\end{equation}
	The above is precisely in the form required by Lemma \ref{lem:Gronwall} which lets us solve the recurrence
	\begin{equation}
		E^{k}_n \leq a \sum_{j=0}^{k-1} \sum_{i=0}^{n-j-1} \binom{i+j}{j} c^i b^j + E^0_n b^k \sum_{j=k}^{n-1} \binom{j-1}{k-1} c^{j-k}.
	\end{equation}
	The sums can be estimated as follows. Since the binomial coefficient is an increasing function of the upper variable, we have $\binom{i+j}{j} \leq \binom{n-1}{j}$, and hence
	\begin{equation}
		\sum_{j=0}^{k-1} \sum_{i=0}^{n-j-1} \binom{i+j}{j} c^i b^j \leq \sum_{j=0}^{k-1} \binom{n-1}{j} b^j \sum_{i=0}^{n-j-1} c^i \leq \sum_{j=0}^{n-1} \binom{n-1}{j} b^j \frac{c^{n-j}-1}{c-1},
	\end{equation}
	where we have estimated $k \leq n$ and summed the geometric series. Furthermore, by estimating again and summing the binomial series we obtain a simple estimate that illustrates the saturation point for $k\geq n$ and is bounded for $c\rightarrow 1^+$
	\begin{equation}\label{eqn:DoubleSumEst}
		\sum_{j=0}^{k-1} \sum_{i=0}^{n-j-1} \binom{i+j}{j} c^i b^j \leq \frac{1}{c-1}\sum_{j=0}^{n-1} \binom{n-1}{j} b^j \frac{c^{n-j} - 1}{c-1} = \frac{1}{c-1} \left(c(b+c)^{n-1} - (1+b)^{n-1}\right).
	\end{equation}
	In the next sum, we simply estimate $c^{j-k} \leq c^{n-k-1}$ and use the well-known \textit{hockey stick identity} for the binomial $\sum_{i=p}^q \binom{i}{p} = \binom{q+1}{p+1}$
	\begin{equation}\label{eqn:SingleSumEst}
		\sum_{j=k}^{n-1} \binom{j-1}{k-1} c^{j-k} \leq c^{n-k-1} \sum_{j=k}^{n-1} \binom{j-1}{k-1} = c^{n-k-1} \sum_{i=k-1}^{n-2} \binom{i}{k-1} = c^{n-k-1} \binom{n-1}{k}.
	\end{equation}
	Hence, 
	\begin{equation}
		E^{k}_n \leq \frac{ac}{c-1} (b+c)^{n-1} + E^0_n b^k c^{n-k-1} \binom{n-1}{k}.
	\end{equation}
	By unraveling the definitions of constants $a$, $b$, $c$, and recalling that $E^0_n = \max_{0\leq j\leq n} \|u(T_{n+1}) - \mathcal{G}_n(U_{0:n,N})\|$ is the error of the sequential coarse propagator, we conclude the proof. 
\end{proof}

From the main error estimate \eqref{eqn:PararealError} we see that it decomposes into two terms: one involving the coarse and the other the fine integrator errors. When $k\rightarrow n$ the former vanishes leaving only the latter present, confirming our discussion in \eqref{eqn:PararealConvergence} that the parareal algorithm converges to the solution of the fine integrator. Notice that the constants multiplying the propagator errors grow exponentially with $n$. However, in our analysis, we treat $n$ as a fixed number and consider only the convergence of the parareal iterations. Also, it is usual in the a priori analysis that the constants are very large. In practice, adaptive algorithms are implemented that terminate iterations as soon as the a posteriori error estimate is satisfied. Usually, this happens after just a few iterations (as observed in \cite{gander2007analysis}). 

An illustration of the bounds found for the sums that appear that involve binomials as in \eqref{eqn:SingleSumEst} and \eqref{eqn:DoubleSumEst} is shown in Fig. \ref{fig:Sums}. As can be seen, the estimate for the single sum \eqref{eqn:SingleSumEst} is very accurate and vanishes as soon as $k = n$.

\begin{figure}
	\centering
	\includegraphics{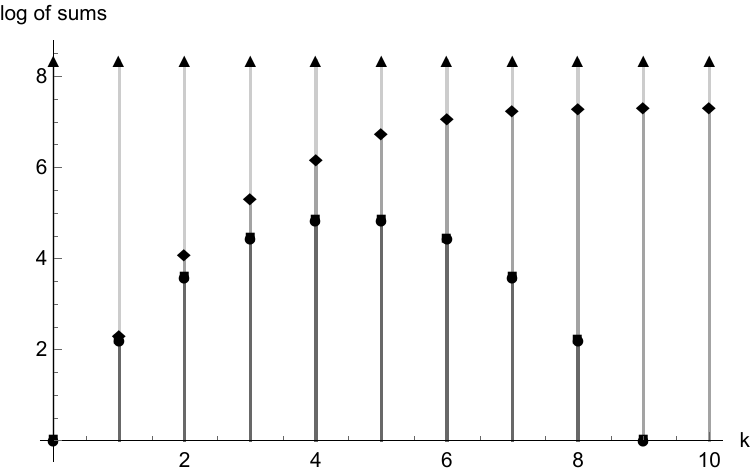}
	\caption{Logarithm of sums involving binomials and their bounds. $\bullet$ is the double sum in \eqref{eqn:DoubleSumEst} and $\blacksquare$ is its estimate. $\blacklozenge$ is the single sum in \eqref{eqn:SingleSumEst} and $\blacktriangle$ is its estimate. Here, $n=10$, $b = 1.1$, $c = 1.001$. }
	\label{fig:Sums}
\end{figure}

We have proved Theorem \ref{thm:Convergence} under the general assumption that both integrators are Lipschitz. Of course, thanks to Proposition \ref{prop:Lipschitz} it is satisfied by $\mathcal{G}$ and $\mathcal{F}$ introduced in Section \ref{sec:Parareal}. For these, more can be said about the error in solving our main problem \eqref{eqn:MainPDE}. 

\begin{cor}
	Let the assumptions of Theorem \ref{thm:Convergence} be satisfied. Assume that the coarse and fine integrators are chosen to be \eqref{eqn:CoarsePropagator} and \eqref{eqn:FinePropagator}, respectively. If $u$ is additionally $H^m(\Omega) \cap H^1_0(\Omega)$ with $m > 1$ in space and the Galerkin scheme is Legendre spectral, then provided that $\|\nabla u\|_\infty < \infty$, we have
	\begin{equation}
		\begin{split}
			&\|u(T_n) - U^k_{n,N}\| \\
			&\leq C\left[\frac{a}{c-1} \left(c(b+c)^{\min\{k,n\}-1} - (b+1)^{\min\{k,n\}-1}\right)\left(N^{-m} + 
			M^{1+\alpha} (\Delta t)^{\frac{\alpha}{2}} 
			\begin{cases}
				1, & \alpha \neq \frac{1}{2}, \\
				\left(\ln (\Delta t)^{-1}\right)^{\frac{1}{2}}, & \alpha = \frac{1}{2}, \\
			\end{cases}\right) \right. \\
			&\left.+ b^k c^{n-k-1} \binom{n-1}{k} \left(N^{-m} + 
			(\Delta T)^{\frac{\alpha}{2}} 
			\begin{cases}
				1, & \alpha \neq \frac{1}{2}, \\
				\left(\ln (\Delta T)^{-1}\right)^{\frac{1}{2}}, & \alpha = \frac{1}{2}, \\
			\end{cases}\right)\right],
		\end{split}
	\end{equation}
	where $C=C(\alpha, u) > 0$ and the constants $a$, $b$, and $c$ are defined according to the Lipschitz constants in Proposition \ref{prop:Lipschitz}. 
\end{cor}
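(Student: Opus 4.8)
The plan is to reduce the statement to Theorem~\ref{thm:Convergence} and then estimate the two quantities on its right-hand side, the sequential fine error $\max_{j}\|u(T_{j+1})-\mathcal{F}_j(U_{0:j,N})\|$ and the sequential coarse error $\max_{j}\|u(T_{j+1})-\mathcal{G}_j(U_{0:j,N})\|$, after which the claim follows by substitution into \eqref{eqn:PararealError}. For each of these I would insert the spatial semi-discretization: writing $u_N$ for the solution of \eqref{eqn:MainPDEWeakSemi}, the triangle inequality gives
\[
\|u(T_{j+1})-U_{j+1,N}\|\le\|u(T_{j+1})-u_N(T_{j+1})\|+\|u_N(T_{j+1})-U_{j+1,N}\|,
\]
so that it suffices to bound a purely spatial (Galerkin) term and a purely temporal term for each of the schemes \eqref{eqn:NumericalSchemeSequential} and \eqref{eqn:NumericalSchemeFine}, and then add the resulting estimates.

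For the spatial term I would use the standard Legendre spectral approximation theory: the $H^1_0$-conforming projection (equivalently, the Ritz projection associated with the form $a$) of a function in $H^m(\Omega)\cap H^1_0(\Omega)$ into $V_N$ has $L^2$ error $O(N^{-m})$. Subtracting \eqref{eqn:MainPDEWeakSemi} from the weak form \eqref{eqn:MainPDEWeak}, testing with the projected semi-discrete error, and running the same energy argument as in Proposition~\ref{prop:Lipschitz} (the Caputo term contributes a non-negative quadratic form, the diffusion term is coercive since $D\ge D_-$, and the quasilinear terms are absorbed through the Lipschitz bounds \eqref{eqn:Assumptions} together with $\|\nabla u\|_\infty<\infty$, which controls $a(D(u)-D(u_N);u_N,\cdot)$ via an $\epsilon$-Cauchy inequality), I expect $\|u-u_N\|\le CN^{-m}$ uniformly on $[0,T]$ with an $\alpha$-independent constant.

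For the temporal term I would first record the local truncation errors. For the hybrid fine scheme these are exactly the bounds of Proposition~\ref{prop:L1Hybrid} evaluated at $r=M$ (the coarse nodes), dominated by $C(\Delta t)^{1-\alpha}T_n^{\alpha-1}M^{\alpha+1}$ with the more singular $O(1)$ and $O(M^\alpha)$ behaviour confined to the first one or two coarse intervals; for the coarse scheme the analogous standard L1 truncation error follows from the same proof with $M=1$ (or from \cite{del2025numerical}), namely $O(1)$ at the first step and $O((\Delta T)^{1-\alpha}T_n^{\alpha-1})$ afterwards. I would then set up the error recursion for $u_N-U_N$ as in Proposition~\ref{prop:Lipschitz}, now keeping the truncation term $\rho_n$ as a source: testing with the current error and using coercivity plus the $\epsilon$-Cauchy inequality, one reaches an inequality of the form $\|e_n\|^2\le C(\Delta t)^{\alpha}\Gamma(2-\alpha)\sum_{j}\omega_{n-j}\|\rho_j\|^2$ after a discrete fractional Grönwall step closes the implicit terms, where $\omega_k\ge 0$ are the $\alpha$-summable L1 weights. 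Inserting the truncation bounds and summing, the origin-singular part contributes like $\sum_j j^{2\alpha-2}$, which is bounded for $\alpha\ne\tfrac12$ and is $O(\log(\Delta t)^{-1})$ for $\alpha=\tfrac12$; taking square roots of an $O((\Delta t)^\alpha)$--type right-hand side yields $\|u_N(T_n)-U_{n,N}\|\le C M^{1+\alpha}(\Delta t)^{\alpha/2}$ for the fine scheme and $\le C(\Delta T)^{\alpha/2}$ for the coarse scheme, each multiplied by $(\log(\cdot)^{-1})^{1/2}$ at $\alpha=\tfrac12$, with constants depending on $u$ and $\alpha$ but not on the iteration count. Combining these with the $N^{-m}$ spatial bound and feeding the sums into \eqref{eqn:PararealError} completes the proof.

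The main obstacle is precisely this last step: converting the non-uniform, origin-singular local truncation errors of Proposition~\ref{prop:L1Hybrid} into a global $L^2$-in-space bound. Unlike the Lipschitz estimates of Proposition~\ref{prop:Lipschitz}, one cannot simply iterate a contraction, since a factor $1+C(\Delta T)^\alpha$ would compound to $\exp(CT(\Delta T)^{\alpha-1})$; a genuine discrete fractional Grönwall inequality (of Liao--McLean--Zhang type) is needed to exploit the positivity of the L1 kernel, and the summation of $\|\rho_j\|^2$ against the weights $\omega_k$ must be done carefully enough to see both the $\alpha/2$ loss of order and the logarithmic resonance at $\alpha=\tfrac12$. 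A secondary technical point is checking that no constant — in particular those inside the Mittag-Leffler Grönwall factor and in the spectral projection estimate — degenerates as $\alpha\to 0^+$ or $\alpha\to 1^-$, which is what makes the stated estimate uniform in $\alpha$ away from $\alpha=\tfrac12$.
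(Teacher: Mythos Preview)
Your proposal is correct and follows essentially the same route as the paper: reduce to Theorem~\ref{thm:Convergence}, invoke the Legendre spectral $N^{-m}$ bound (the paper cites \cite{plociniczak2023linear} directly rather than splitting through the semi-discrete solution $u_N$, but the content is the same), derive an energy inequality with the truncation residual as a source, close it via a discrete fractional Gr\"onwall lemma (the paper cites \cite{del2025numerical}, Lemma~A1), and then sum the squared truncation errors from Proposition~\ref{prop:L1Hybrid} against the fractional weights using the convolution estimate (3.20) of \cite{lopez2025convolution} to produce the $(\Delta t)^{\alpha/2}$ rate and the logarithmic factor at $\alpha=\tfrac12$. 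One minor over-reach: your secondary concern about uniformity as $\alpha\to 0^+$ or $\alpha\to 1^-$ is unnecessary, since the stated constant is explicitly $C=C(\alpha,u)$ and the paper does not claim $\alpha$-independence of this factor.
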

\begin{proof}
	In \cite{plociniczak2023linear} it has been shown that the error in space of the Galerkin spectral discretization of the quasilinear subdiffusion equation \eqref{eqn:MainPDE} is proportional to $N^{-m}$. Thus, we have to focus only on the time discretisation. Exactly in the same way as in the proof of Theorem 3 from \cite{plociniczak2023linear}, using the assumption of the boundedness of the gradient, we can show that (formula (82) in \cite{plociniczak2023linear})
	\begin{equation}
		\begin{split}
			&\delta^\alpha \|u(T_{n+1}) - \mathcal{F}_n(U_{n,0:M,N}, U_{0:n,N})\|^2 \\ 
			&\leq C\left(\|u(T_{n}) - \mathcal{F}_{n-1}(U_{n-1,0:M, N}, U_{0:n-1})\|^2 + N^{-2m} + \|\partial^\alpha u(T_n) - \delta^\alpha u(T_n)\|^2\right),
		\end{split}
	\end{equation}
	and analogously for the coarse integrator $\mathcal{G}$. The above is a standard error estimate that is found in many numerical contexts. It can be resolved by application of the corresponding discrete fractional Gronwall inequality which implies (see \cite{del2025numerical}, Lemma A1)
	\begin{equation}\label{eqn:FinePropagatorError}
		\|u(T_{n+1}) - \mathcal{F}_n(U_{n,0:M,N}, U_{0:n,N})\|^2 \leq C \left(N^{-2m} + \frac{(\Delta t)^\alpha}{\Gamma(\alpha)} \sum_{i=0}^{n-1} (n-i)^{\alpha-1}  \|\partial^\alpha u(T_{i+1}) - \delta^\alpha u(T_{i+1})\|^2\right),
	\end{equation}
	and we have to estimate the sum corresponding to the discrete fractional integral. By Proposition \ref{prop:L1Hybrid} we know the bound for the discretisation error of the Caputo derivative and, hence,
	\begin{equation}
		\begin{split}
			\frac{(\Delta t)^\alpha}{\Gamma(\alpha)} &\sum_{i=0}^{n-1} (n-i)^{\alpha-1}  \|\partial^\alpha u(T_{i+1}) - \delta^\alpha u(T_{i+1})\|^2 \\
			&\leq \frac{1}{\Gamma(\alpha)} (\Delta t)^{\alpha}\left(\left(\frac{M^\alpha}{\Gamma(1-\alpha)}\right)^2 n^{\alpha-1} + M^{4\alpha}  \sum_{i=0}^{n-1} (n-i)^{\alpha-1} (i+1)^{2\alpha-2} \right).
		\end{split}
	\end{equation}
	The appearing sum can be bounded with respect to the value of $\alpha$ thanks to the results from \cite{lopez2025convolution}, formula (3.20)
	\begin{equation}
		\begin{split}
			\frac{(\Delta t)^\alpha}{\Gamma(\alpha)} &\sum_{i=0}^{n-1} (n-i)^{\alpha-1}  \|\partial^\alpha u(T_{i+1}) - \delta^\alpha u(T_{i+1})\|^2 \\
			&\leq \frac{M^{4\alpha}}{\Gamma(\alpha)} (\Delta t)^{\alpha}\left(
			\left(\frac{1}{\Gamma(1-\alpha)}\right)^2 n^{\alpha-1} + 
			\begin{cases}
				n^{3\alpha-2}, & \frac{1}{2}<\alpha<1, \\
				n^{-\frac{1}{2}} \ln n, & n = \frac{1}{2}, \\
				n^{\alpha-1}, & 0< \alpha < \frac{1}{2}, \\
			\end{cases}
			\right).
		\end{split}
	\end{equation}
	Now, for $\alpha > 2/3$, we can write $n^{3\alpha-2} = T_n^{3\alpha-2} (\Delta T)^{2-3\alpha} \leq  T^{3\alpha-2} (\Delta T)^{2-3\alpha}$. Similarly, for $1/2 < \alpha < 2/3$ we have $n^{3\alpha-2} \leq 1$, for $\alpha = 1/2$ we have $n^{-1/2} \ln n \leq \ln T_n - \ln (\Delta T) \leq C \ln(\Delta T)^{-1}$, and for $0<\alpha<1$ we see that $n^{\alpha-1} \leq 1$. Therefore,
	\begin{equation}
		\begin{split}
			\frac{(\Delta t)^\alpha}{\Gamma(\alpha)} &\sum_{i=0}^{n-1} (n-i)^{\alpha-1}  \|\partial^\alpha u(T_{i+1}) - \delta^\alpha u(T_{i+1})\|^2 \\
			&\leq C\frac{M^{4\alpha}}{\Gamma(\alpha)} (\Delta t)^{\alpha}\left(
			\left(\frac{1}{\Gamma(1-\alpha)}\right)^2 + 
			\begin{cases}
				(M\Delta t)^{2-2\alpha}, & \frac{2}{3} < \alpha < 1, \\
				1, & \frac{1}{2}<\alpha<\frac{2}{3}, \\
				\ln (\Delta t)^{-1}, & n = \frac{1}{2}, \\
				1, & 0< \alpha < \frac{1}{2}, \\
			\end{cases}
			\right)\\
			&\leq C(\alpha) M^{2+2\alpha} (\Delta t)^\alpha 
			\begin{cases}
				1, & \alpha \neq \frac{1}{2}, \\
				\ln (\Delta t)^{-1}, & n = \frac{1}{2}. \\
			\end{cases}
		\end{split}
	\end{equation}
	Returning to \eqref{eqn:FinePropagatorError}, taking the maximum with respect to $n$, and the square root, we arrive at the final form of the fine integrator error. The proof for the coarse integrator is completely analogous. The proof is thus completed.  
\end{proof}

\begin{rem}
	The error order in time found in the above corollary is not globally optimal. The reason for this is the nonlinear diffusivity in the main equation \eqref{eqn:MainPDE} as observed in \cite{lopez2025convolution}. This forces a different proof technique in the $L^2$ setting that increases the error estimate. However, note that we assume the \textit{minimal} realistic regularity of the solution. In the case of higher smoothness over time, the error estimate would be globally optimal. For example, as noticed in Remark \ref{rem:Regularity}, having a solution that is $C^2[0,T]$ in time would produce the best possible order $2-\alpha > 1$. However, we note that this often assumed regularity is not realistic and occurs very rarely. In our analysis, we decided to work in the natural degree of smoothness.
\end{rem}

\begin{rem}
	In \cite{xu2015parareal}, the authors analyze a parareal algorithm applied to systems of fractional differential equations using the L1 time discretization scheme. Although they establish a stability bound for the parareal iteration under Lipschitz continuity of the propagators, their estimate involves a factor of the form \( 1 + C\Delta t \). However, as demonstrated in Proposition \ref{prop:Lipschitz} above, for fractional problems, this factor should be more accurately expressed as \( 1 + C(\Delta t)^\alpha \), reflecting the nonlocal nature of the fractional derivative.
	
	Furthermore, in Theorem 3.5, the authors derive an error bound involving a term \( c^{k+1} e^{c k} \). A straightforward analysis shows that this quantity is bounded only if \( c < z_0 \), where \( z_0 \approx 0.567143 \) is the unique positive solution to equation \( z + \ln z = 0 \) (a product log), assuming \( T_n = 1 \) for simplicity. However, their assumptions and proof do not guarantee that \( c < z_0 \). Although the authors suggest that \( c \) may be negative in certain parabolic discretizations, their actual proof does not support this claim. In fact, the argument suggests that \( c > 0 \) always holds. Hence, the convergence estimate provided may not be valid in the generality asserted.
\end{rem}

\section{Numerical illustration}
In what follows, we describe the practical implementation of our scheme and present some numerical examples verifying the claimed efficiency of the constructed parareal method.

\subsection{Implementation with the spectral scheme in space}

So far, we have talked mostly about the temporal disretisation to which the parareal method was applied. To facilitate the spatial discretisation and to make computations as fast as possible, we use the Chebyshev--Lobatto spectral collocation method. Here, we briefly describe its basic principles, but the interested reader can find much more details in monographs \cite{canuto2006spectral, trefethen2000spectral}. 

For simplicity, we first describe the method on the reference interval \([-1,1]\) on which the scheme is most easily defined. The Chebyshev--Lobatto (C--L) nodes are
\begin{equation}
	\xi_j = \cos\!\bigg(\frac{j\pi}{N}\bigg),\qquad j=0,\dots,N.
\end{equation}
Then, on a physical 1D interval \([a,b]\) we use the affine map
\begin{equation}
	x = \frac{a+b}{2} + \frac{b-a}{2}\,\xi,
\end{equation}
so that the derivatives are transformed as \(\partial_x = 2(b-a)^{-1}\partial_\xi\). For rectangular multidimensional domains \(\Omega = \prod_{r=1}^d [a_r,b_r]\), we apply the 1D mapping in each coordinate. Now, we approximate \(u(\cdot,t)\) by the degree-$N$ Chebyshev interpolant
\begin{equation}
	u_N(\xi,t) = \sum_{k=0}^N a_k(t) T_k(\xi),
\end{equation}
or equivalently by its values at the C--L nodes (the relation between coefficients $a_k(t)$ and pointwise nodal values, thanks to the properties of Chebyshev polynomials, can be easily obtained via the discrete cosine transform)
\begin{equation}
	u_{j}(t) \approx u_N(\xi_j,t),\qquad j=0,\dots,N.
\end{equation}
In practice, the collocation (pseudospectral) formulation works directly with the vector of values \(\mathbf{u}(t) = [u_0(t),u_1(t),\dots,u_N(t)]^\top\). The key object in the discretisation of the subdiffusion equation is the \textit{differentiation matrix} (see \cite{trefethen2000spectral}, Chapter 6). Define weights
\begin{equation}
	c_j=\begin{cases} 2,& j=0,N,\\ 1,& 1\le j\le N-1. \end{cases}
\end{equation}
The first derivative (spectral) differentiation matrix \(D\in\mathbb{R}^{(N+1)\times(N+1)}\) has entries 
\begin{equation}
	D_{ij} = 
	\begin{cases}
		\displaystyle \frac{c_i}{c_j}\,\frac{(-1)^{i+j}}{x_i-x_j}, & i\neq j,\\[8pt]
		\displaystyle -\sum_{\substack{m=0\\ m\neq i}}^N D_{im}, & i=j.
	\end{cases}
\end{equation}
Equivalently, diagonal entries can be written explicitly as
\begin{equation}
	D_{ii} = 
	\begin{cases}
		\displaystyle \frac{2N^2+1}{6}, & i=0, \\[6pt]
		\displaystyle -\frac{x_i}{2(1-x_i^2)}, & 1\le i\le N-1,\\[6pt]
		\displaystyle -\frac{2N^2+1}{6}, & i=N.
	\end{cases}
\end{equation}
Of course, on a physical interval \([a,b]\) we scale \(D\) by the factor \(\frac{2}{b-a}\), that is, \(D_{[a,b]} = \frac{2}{b-a}\,D\). Furthermore, the second derivative matrix is \(D^{(2)}=D^2\). Finally, acting on the vector of nodal values yields
\begin{equation}
	\mathbf{u}' \approx D\mathbf{u},\qquad \mathbf{u}'' \approx D^{(2)}\mathbf{u}.
\end{equation}
So derivative operators are approximated by matrices. In higher dimensions, for \(d\ge 2\) in a rectangular domain \(\Omega=\prod_{r=1}^d[a_r,b_r]\) we form tensor product C--L grids. For example, in 2D with \(N_x,N_y\) nodes in the \(x,y\) directions the grid nodes are
\begin{equation}
	(x_i,y_j) = \Big( \,x(\xi_i),\, y(\eta_j)\Big),\qquad \xi_i=\cos\frac{i\pi}{N_x},\;\eta_j=\cos\frac{j\pi}{N_y}.
\end{equation}
Let \(D_x\) be the 1D differentiation matrix in the \(x\)-direction (scaled to the physical interval \([a_x,b_x]\)) and \(D_y\) the corresponding \(y\)-matrix. Using the Kronecker product $\otimes$, the 2D discrete derivative matrices acting on the vectorized (lexicographically ordered) solution \(\mathbf{u}\) are
\begin{equation}
	\mathcal{D}_x = D_x \otimes I_{N_y+1}, \qquad	\mathcal{D}_y = I_{N_x+1} \otimes D_y,
\end{equation}
and the constant-coefficient Laplacian reads
\begin{equation}
	\Delta_h \;=\; \mathcal{D}_x^2 + \mathcal{D}_y^2.
\end{equation}
Analogous constructions are held to higher dimensions by additional Kronecker products.

Since, in general, our equation \eqref{eqn:MainPDE} is quasilinear, we have to speciffically construct the nonlinear diffusivity operator. The diffusion term is \(\nabla \cdot (D(x,t,u)\nabla u)\). In collocation form, we store the nodal values of the diffusion coefficient
\begin{equation}
	D_{p} = D(x_p,t,u_p),\qquad p \text{ indexes grid nodes}.
\end{equation}
Let \(\mathbf{D}(\mathbf{u},t)=\operatorname{diag}(D(x_p,t,u_p))\) be the diagonal matrix of nodal diffusion values (in the same ordering as the unknowns). The discrete approximation of \(\nabla \cdot (D(x,t,u)\nabla u)\) is then formed by applying the discrete divergence to the vector of discrete fluxes
\begin{equation}
	\mathcal{N}(\mathbf{u},t) \mathbf{u} := \sum_{r=1}^d \mathcal{D}_r\!\big(\mathbf{D}(\mathbf{u},t)\,\mathcal{D}_r\mathbf{u}\big),
\end{equation}
where \(\mathcal{D}_r\) is the discrete derivative in coordinate \(r\) (Kronecker product form). This yields a (generally) nonlinear algebraic operator \(\mathcal{N}(\mathbf{u},t)\) because \(\mathbf{D}\) depends on \(\mathbf{u}\). Homogeneous Dirichlet BCs \(u=0\) on \(\partial\Omega\) are enforced at the C--L boundary nodes by separating unknowns into interior and boundary nodes:
\begin{equation}
	\mathbf{u} = \begin{bmatrix}\mathbf{u}_I\\ \mathbf{u}_B\end{bmatrix}, \quad \mathbf{u}_B \equiv \mathbf{0}.
\end{equation}
Time stepping is then performed for interior unknowns only, and we refrain from writing subscript $I$ for brevity.

After spatial discretisation and the application of the semi-implicit L1 scheme in time (as in \eqref{eqn:NumericalSchemeSequential} and \eqref{eqn:NumericalSchemeFine}), we obtain a fully-discrete \textit{linear} system for the vector of interior unknowns \(\mathbf{u}\)
\begin{equation}\label{eqn:NumericalSchemeMainSystem}
	\delta_t^\alpha \mathbf{u}_n = \mathcal{N}\left(\mathbf{u}_{n-1} ,t_{n-1}\right) \mathbf{u}_n + \mathbf{f}\left(\mathbf{u}_{n-1} , t_{n-1}\right),
\end{equation}
where \(\mathbf{f}\) is the collocated source vector (restricted to interior nodes). This system can then be solved by a dedicated linear algebra package. Efficient solvers or preconditioners that exploit the Kronecker structure can be used.

\subsection{Examples}
We can now illustrate the efficiency of our algorithm with a concrete numerical example. The computer code has been implemented in the Julia programming language with the following performance improvements:
\begin{itemize}
	\item Use \texttt{@views} to avoid array copies when slicing;
	\item Preallocate all large work arrays outside of the time-stepping loops;
	\item Exploit \texttt{Threads.@threads} for parallelising the fine propagator over coarse intervals in each parareal iteration;
	\item Reuse differentiation matrices and quadrature weights for all time steps;
	\item Avoid repeated allocation of history term arrays in the L1 scheme by maintaining rolling buffers.
\end{itemize}
All our computations have been run on a 12th-Gen Intel(R) Core(TM) i7-1265U processor with 12 threads in parallel. 

As a realistic model problem we choose \eqref{eqn:MainPDE} with
\begin{equation}
	\Omega = [0,1], \quad D(x,t,u) = 1 + u, \quad f(x,t,u) = \sin(\pi x) e^{-t}, \quad u_0(x) = x^4(1-x)^4, \quad \alpha = 0.5,
\end{equation}
which provides a sufficient amount of nonlinearity in the problem. We have also tested our code on different examples, obtaining quantitatively the same result. 

The first test is how the parareal iterations converge. All of our simulations (for different $N_t$, $N$, $\alpha$, and $M$) show the same conclusion illustrated in Fig. \ref{fig:errorIterations}: the error at $t = T$ between the parareal iterations and the (purely sequential) fine solution. As can be seen, our computations confirm that parareal iterations converge after only a few steps. This observation is robust: we have seen it in a series of other simulations and it has also been reported in the literature \cite{gander2025time}. Note, however, that the error does not converge to the machine epsilon as the pure theory predicts. This happens for many potential reasons. In our implementation we have multiple sources of non-exactness (nonlinear diffusivity, truncation from the L1 scheme, semi-implicit linearisation, rounding in linear solves, finite precision arithmetic, conditioning of the spatial operator, slight differences in operation order). Those errors accumulate and are amplified by the conditioning of the linear systems. Therefore, although we do not observe the convergence to the smallest possible value, the error still saturates at a decently small level. Playing with parameters can reduce this error further. We can thus confirm the validity of our Theorem \ref{thm:Convergence} with respect to convergence and stating that the error of the parareal method is bounded by the error of the fine integrator. 

\begin{figure}
	\centering
	\includegraphics[scale = 0.8]{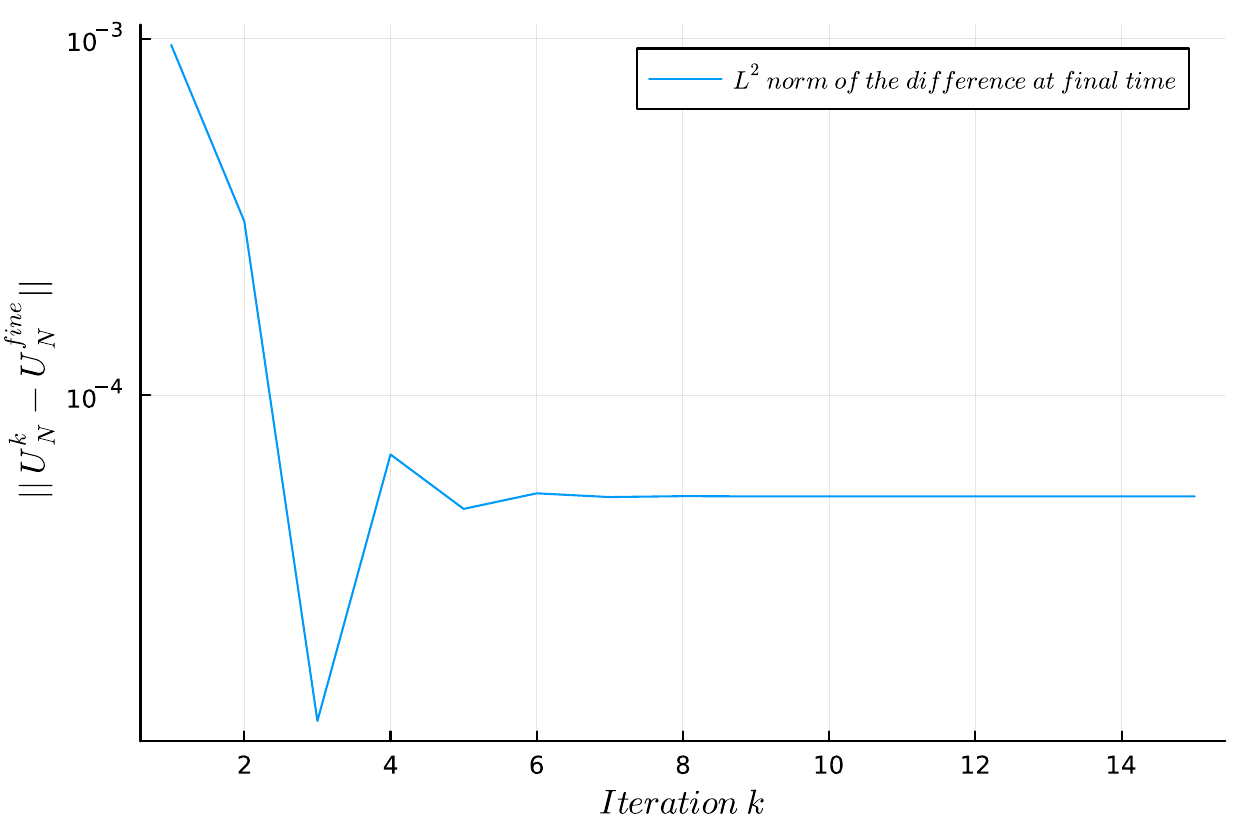}
	\caption{The difference $\|U^k_{N} - U^{fine}_N\|$ with respect to the iteration count $k$. Here, $N_t = 2^8$, $N = 2^4$, $M = 2^5$, $\alpha = 0.5$. }
	\label{fig:errorIterations}
\end{figure} 

The next test we perform is the evaluation of the efficiency against the sequential fine solver. The main parameter investigated here is the number of fine degrees of freedom $N_t \times M$ (that is, the reciprocal of the fine step $\Delta t$). The tolerance for stopping parareal iterations was set at $10^{-10}$. In Fig. \ref{fig:Runtime}, we can see both the time of computations (in seconds, evaluated after taking several iterations to reduce the computer background noise) and the \textit{speedup}, that is the ratio
\begin{equation}
	\text{speedup } := \frac{\text{fine solver time}}{\text{parareal time}}.
\end{equation}
As we can see, the computation time for both of our methods increases as a power law (note the log-log scale). Fine solver shows \textit{superquadratic} behavior while the parareal implementation is just slightly above \textit{linear} dynamics. This makes a profound difference in the case where the number of degrees of freedom is large. The lower plot in Fig. \ref{fig:Runtime} further stresses it. It shows that the speedup of the parareal algorithm increases linearly with the degrees of freedom reaching high values of $14-15$ times faster computations for $N_t \times M \approx 3 \times 10^4$. Even with a more moderate number of time steps, such as $O(10^3)$, we obtain a speedup of three to five times! This clearly shows that using our parareal scheme is very beneficial. 

\begin{figure}
	\centering
	\includegraphics[scale = 0.8]{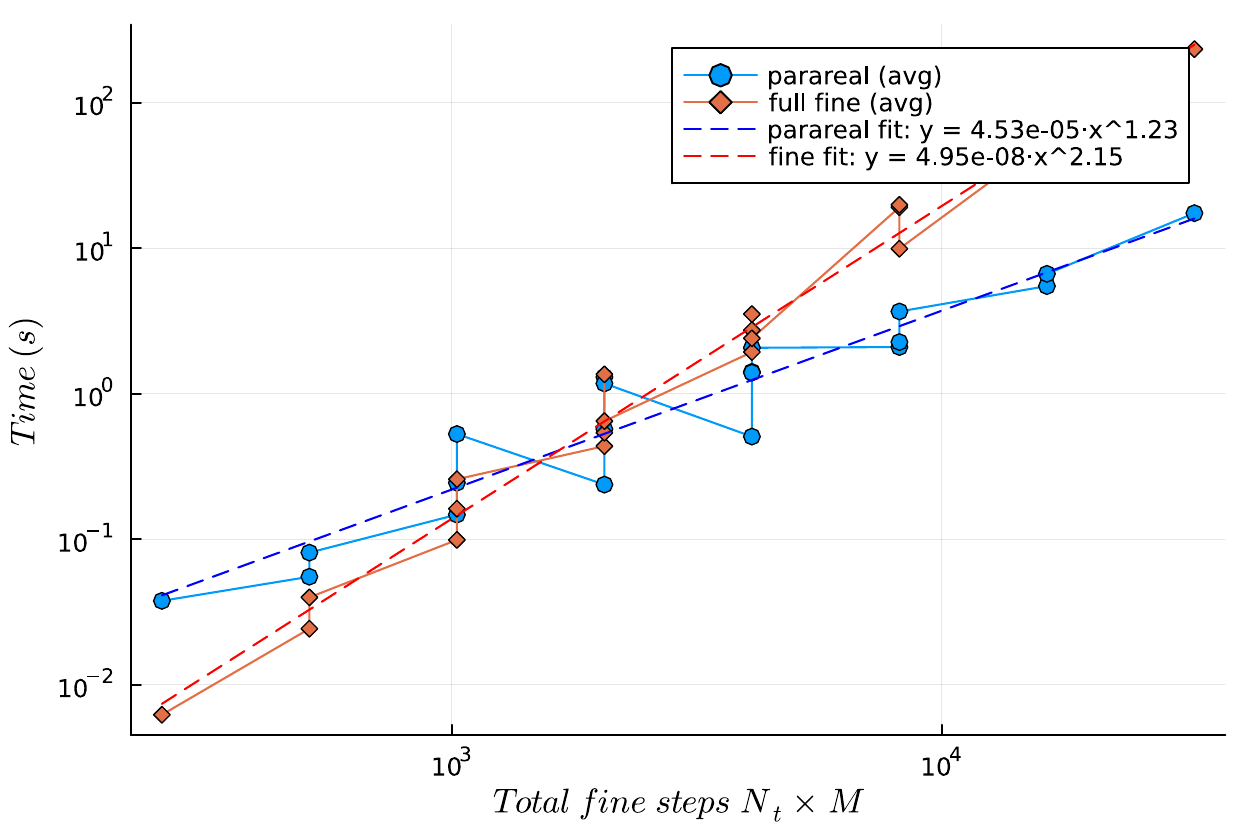}
	\includegraphics[scale = 0.8]{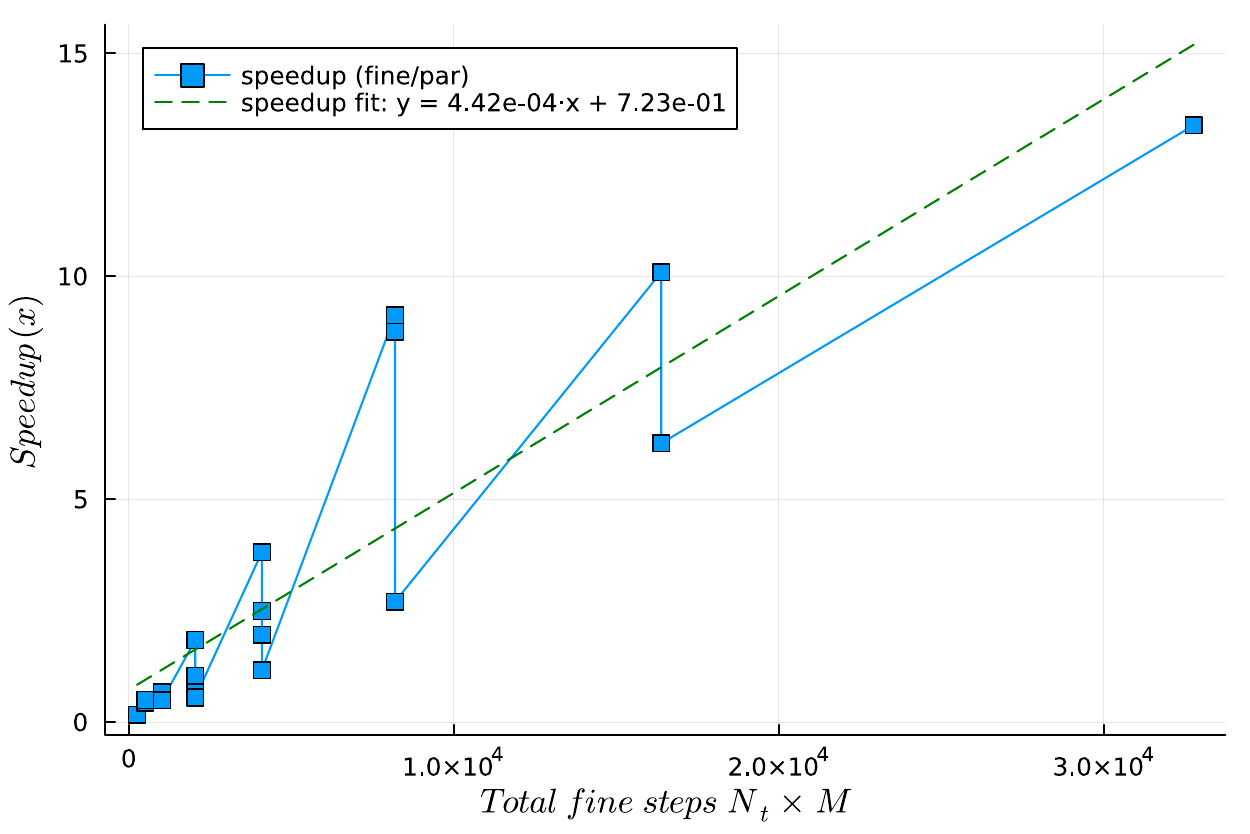}
	\caption{The runtime and the speedup for computations with the fine sequential solver and the parareal algorithm. $N = 2^4$, $\alpha = 0.5$. }
	\label{fig:Runtime}
\end{figure} 

The last test we conducted was the memory allocation check. In Fig. \ref{fig:Allocations}, we can see the amount of memory needed to perform simulations for both the parareal and the fine sequential solver. Although for a small number of degrees of freedom, the sequential integrator needs less memory, for larger ones the parareal develops a significant advantage requiring much fewer bytes for allocating memory. 

\begin{figure}
	\centering
	\includegraphics[scale = 0.8]{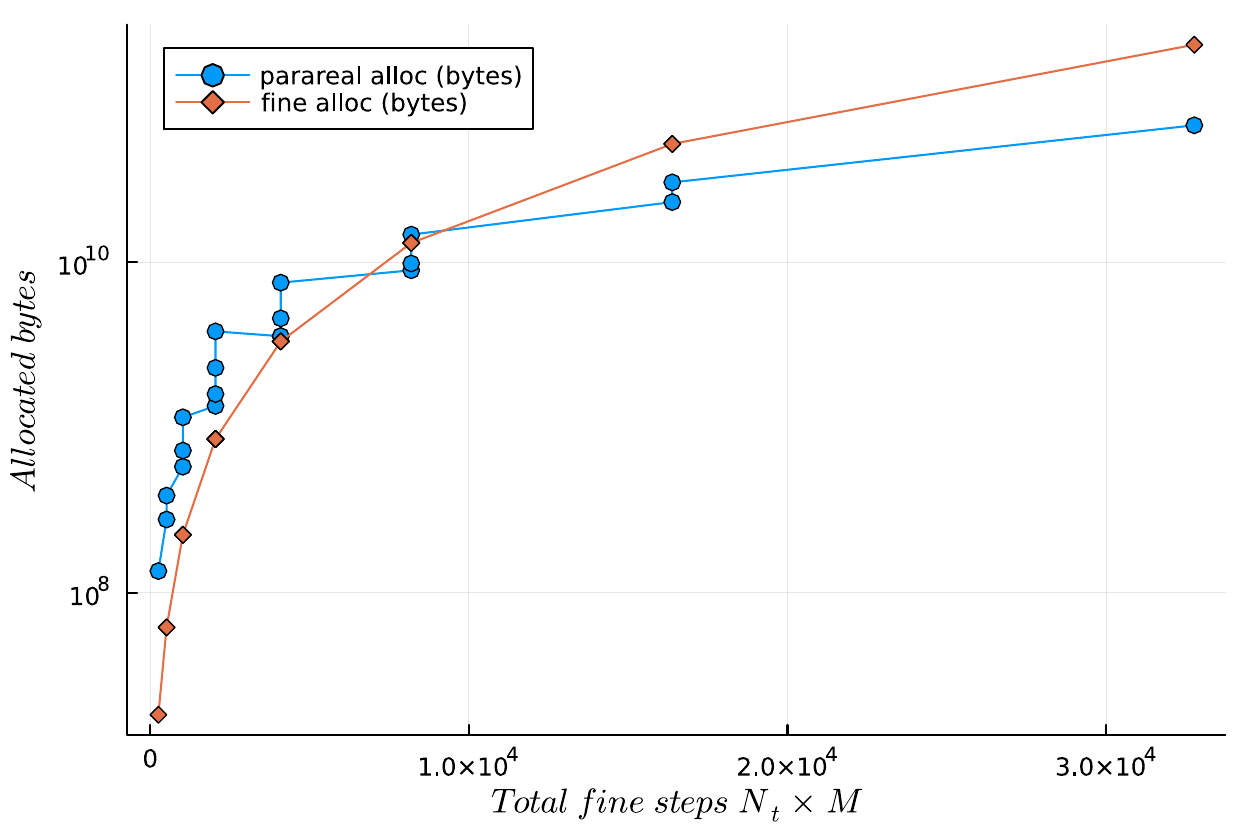}
	\caption{Number of allocated bytes for computations with the fine sequential solver and the parareal algorithm. Here, $N = 2^4$, $\alpha = 0.5$. }
	\label{fig:Allocations}
\end{figure} 

\section{Conclusion}
We have developed and analyzed a fully discrete solver for quasilinear time-fractional diffusion problems that couples the parareal-in-time algorithm with an L1 approximation of the Caputo derivative and a spectral collocation discretisation in space. Our main theoretical contribution is a rigorous convergence proof showing that parareal iterations converge to the serial L1–spectral solution in a finite number of steps and produce uniform bounds in the fractional exponent \(0<\alpha\le 1\). The spectral spatial discretisation delivers exponential accuracy for smooth solutions, while the parareal structure provides a practical speedup. Numerical experiments confirm the theoretical convergence and show substantial acceleration (often an order-of-magnitude reduction versus a sequential fine solver) with a speedup roughly linear in the work per coarse interval. 

For future work, we plan for: (i) accelerating long-time or large-NF runs by fast/convolution-compression techniques (sum-of-exponentials, convolution quadrature), (ii) investigating higher-order time integrators or adaptive coarse–fine strategies within parareal, and (iii) improving linear-solve robustness via spectral diagonalization where possible, preconditioning, iterative refinement, or GPU acceleration to extend the method to larger multidimensional problems. These extensions should preserve the accuracy-scalability balance of the method and broaden its applicability to challenging anomalous diffusion applications.

\section*{Acknowledgment}
Ł.P. is supported by the Polish National Agency for Academic Exchange (NAWA) under the Bekker Programme with the signature BPN/BEK/2024/1/00002. J.C. and K. S. are partially supported by the project PID2023-148028NB-I00.


\begin{thebibliography}{10}
	
	\bibitem{akagi2019fractional}
	Goro Akagi.
	\newblock Fractional flows driven by subdifferentials in {Hilbert} spaces.
	\newblock {\em Israel Journal of Mathematics}, 234(2):809--862, 2019.
	
	\bibitem{allen2016parabolic}
	Mark Allen, Luis Caffarelli, and Alexis Vasseur.
	\newblock A parabolic problem with a fractional time derivative.
	\newblock {\em Archive for Rational Mechanics and Analysis}, 221:603--630,
	2016.
	
	\bibitem{alves2016transient}
	Luiz~GA Alves, D{\'e}bora~B Scariot, Renato~R Guimaraes, Celso~V Nakamura,
	Renio~S Mendes, and Haroldo~V Ribeiro.
	\newblock Transient superdiffusion and long-range correlations in the motility
	patterns of trypanosomatid flagellate protozoa.
	\newblock {\em PLOS One}, 11(3):e0152092, 2016.
	
	\bibitem{baffico2002parallel}
	Leonardo Baffico, Stephane Bernard, Yvon Maday, Gabriel Turinici, and Gilles
	Z{\'e}rah.
	\newblock Parallel-in-time molecular-dynamics simulations.
	\newblock {\em Physical Review E}, 66(5):057701, 2002.
	
	\bibitem{bal2005convergence}
	Guillaume Bal.
	\newblock On the convergence and the stability of the parareal algorithm to
	solve partial differential equations.
	\newblock In {\em Domain decomposition methods in science and engineering},
	pages 425--432. Springer, 2005.
	
	\bibitem{biala2018parallel}
	TA~Biala and AQM Khaliq.
	\newblock Parallel algorithms for nonlinear time--space fractional parabolic
	pdes.
	\newblock {\em Journal of Computational Physics}, 375:135--154, 2018.
	
	\bibitem{canuto2006spectral}
	Claudio Canuto, M~Youssuff Hussaini, Alfio Quarteroni, and Thomas~A Zang.
	\newblock {\em Spectral methods: fundamentals in single domains}.
	\newblock Springer, 2006.
	
	\bibitem{cuesta2006convolution}
	Eduardo Cuesta, Christian Lubich, and Cesar Palencia.
	\newblock Convolution quadrature time discretization of fractional
	diffusion-wave equations.
	\newblock {\em Mathematics of Computation}, 75(254):673--696, 2006.
	
	\bibitem{Del05}
	Diego del Castillo-Negrete, Benjamin~A Carreras, and Vickie~E Lynch.
	\newblock Nondiffusive transport in plasma turbulence: a fractional diffusion
	approach.
	\newblock {\em Physical {R}eview {L}etters}, 94(6):065003, 2005.
	
	\bibitem{del2025note}
	F{\'e}lix del Teso and {\L}ukasz P{\l}ociniczak.
	\newblock A note on the l1 discretization error for the caputo derivative in
	h{\"o}lder spaces.
	\newblock {\em Applied Mathematics Letters}, 161:109364, 2025.
	
	\bibitem{del2025numerical}
	F{\'e}lix del Teso and {\L}ukasz P{\l}ociniczak.
	\newblock Numerical methods and regularity properties for viscosity solutions
	of nonlocal in space and time diffusion equations.
	\newblock {\em IMA Journal of Numerical Analysis}, page draf011, 2025.
	
	\bibitem{dipierro2019decay}
	Serena Dipierro, Enrico Valdinoci, and Vincenzo Vespri.
	\newblock Decay estimates for evolutionary equations with fractional
	time-diffusion.
	\newblock {\em Journal of Evolution Equations}, 19:435--462, 2019.
	
	\bibitem{El20}
	Abd El-Ghany El~Abd, SE~Kichanov, Mohamed Taman, Kuanysh~M Nazarov, Denis~P
	Kozlenko, and Wael~M Badawy.
	\newblock Determination of moisture distributions in porous building bricks by
	neutron radiography.
	\newblock {\em Applied Radiation and Isotopes}, 156:108970, 2020.
	
	\bibitem{fu2019preconditioned}
	Hongfei Fu and Hong Wang.
	\newblock A preconditioned fast parareal finite difference method for
	space-time fractional partial differential equation.
	\newblock {\em Journal of Scientific Computing}, 78(3):1724--1743, 2019.
	
	\bibitem{gander201550}
	Martin~J Gander.
	\newblock 50 years of time parallel time integration.
	\newblock In {\em Multiple Shooting and Time Domain Decomposition Methods:
		MuS-TDD, Heidelberg, May 6-8, 2013}, pages 69--113. Springer, 2015.
	
	\bibitem{gander2008nonlinear}
	Martin~J Gander and Ernst Hairer.
	\newblock Nonlinear convergence analysis for the parareal algorithm.
	\newblock In {\em Domain decomposition methods in science and engineering
		XVII}, pages 45--56. Springer, 2008.
	
	\bibitem{gander2007analysis}
	Martin~J Gander and Stefan Vandewalle.
	\newblock Analysis of the parareal time-parallel time-integration method.
	\newblock {\em SIAM Journal on Scientific Computing}, 29(2):556--578, 2007.
	
	\bibitem{gander2025time}
	Martin~J Gander, Shu-Lin Wu, and Tao Zhou.
	\newblock Time parallelization for hyperbolic and parabolic problems.
	\newblock {\em Acta Numerica}, 34:385--489, 2025.
	
	\bibitem{jacquier2020anomalous}
	Antoine Jacquier and Lorenzo Torricelli.
	\newblock Anomalous diffusions in option prices: connecting trade duration and
	the volatility term structure.
	\newblock {\em SIAM Journal on Financial Mathematics}, 11(4):1137--1167, 2020.
	
	\bibitem{Jin19a}
	Bangti Jin, Buyang Li, and Zhi Zhou.
	\newblock Subdiffusion with a time-dependent coefficient: analysis and
	numerical solution.
	\newblock {\em Mathematics of Computation}, 88(319):2157--2186, 2019.
	
	\bibitem{jin2025optimizing}
	Bangti Jin, Qingle Lin, and Zhi Zhou.
	\newblock Optimizing coarse propagators in parareal algorithms.
	\newblock {\em SIAM Journal on Scientific Computing}, 47(2):A735--A761, 2025.
	
	\bibitem{klafter2012fractional}
	Joseph Klafter, SC~Lim, and Ralf Metzler.
	\newblock Fractional dynamics: recent advances.
	\newblock 2012.
	
	\bibitem{langlands2010fractional}
	TAM Langlands and BI~Henry.
	\newblock Fractional chemotaxis diffusion equations.
	\newblock {\em Physical Review E—Statistical, Nonlinear, and Soft Matter
		Physics}, 81(5):051102, 2010.
	
	\bibitem{lawrence1993anomalous}
	John~K Lawrence and Carolus~J Schrijver.
	\newblock Anomalous diffusion of magnetic elements across the solar surface.
	\newblock {\em Astrophysical Journal, Part 1 (ISSN 0004-637X), vol. 411, no. 1,
		p. 402-405.}, 411:402--405, 1993.
	
	\bibitem{Lin07}
	Yumin Lin and Chuanju Xu.
	\newblock Finite difference/spectral approximations for the time-fractional
	diffusion equation.
	\newblock {\em Journal of computational physics}, 225(2):1533--1552, 2007.
	
	\bibitem{lions2001resolution}
	Jacques-Louis Lions, Yvon Maday, and Gabriel Turinici.
	\newblock R{\'e}solution d'edp par un sch{\'e}ma en temps
	{\guillemotleft}parar{\'e}el{\guillemotright}.
	\newblock {\em Comptes Rendus de l'Acad{\'e}mie des Sciences-Series
		I-Mathematics}, 332(7):661--668, 2001.
	
	\bibitem{lopez2025convolution}
	Maria L{\'o}pez-Fern{\'a}ndez and {\L}ukasz P{\l}ociniczak.
	\newblock Convolution quadrature for the quasilinear subdiffusion equation.
	\newblock {\em SIAM Journal on Numerical Analysis}, 63(4):1482--1511, 2025.
	
	\bibitem{maday2007monotonic}
	Yvon Maday, Julien Salomon, and Gabriel Turinici.
	\newblock Monotonic parareal control for quantum systems.
	\newblock {\em SIAM Journal on Numerical Analysis}, 45(6):2468--2482, 2007.
	
	\bibitem{maday2002parareal}
	Yvon Maday and Gabriel Turinici.
	\newblock A parareal in time procedure for the control of partial differential
	equations.
	\newblock {\em Comptes Rendus Mathematique}, 335(4):387--392, 2002.
	
	\bibitem{magin2010fractional}
	Richard~L Magin.
	\newblock Fractional calculus models of complex dynamics in biological tissues.
	\newblock {\em Computers \& Mathematics with Applications}, 59(5):1586--1593,
	2010.
	
	\bibitem{mathew2010analysis}
	Tarek~P Mathew, Marcus Sarkis, and Christian~E Schaerer.
	\newblock Analysis of block parareal preconditioners for parabolic optimal
	control problems.
	\newblock {\em SIAM Journal on Scientific Computing}, 32(3):1180--1200, 2010.
	
	\bibitem{metzler2000random}
	Ralf Metzler and Joseph Klafter.
	\newblock The random walk's guide to anomalous diffusion: a fractional dynamics
	approach.
	\newblock {\em Physics reports}, 339(1):1--77, 2000.
	
	\bibitem{muller2011nonlinear}
	Sebastian M{\"u}ller, Markus K{\"a}stner, J{\"o}rg Brummund, and Volker
	Ulbricht.
	\newblock A nonlinear fractional viscoelastic material model for polymers.
	\newblock {\em Computational Materials Science}, 50(10):2938--2949, 2011.
	
	\bibitem{Mus18}
	Kassem Mustapha.
	\newblock {FEM} for time-fractional diffusion equations, novel optimal error
	analyses.
	\newblock {\em Mathematics of Computation}, 87(313):2259--2272, 2018.
	
	\bibitem{nievergelt1964parallel}
	J{\"u}rg Nievergelt.
	\newblock Parallel methods for integrating ordinary differential equations.
	\newblock {\em Communications of the ACM}, 7(12):731--733, 1964.
	
	\bibitem{ong2020applications}
	Benjamin~W Ong and Jacob~B Schroder.
	\newblock Applications of time parallelization.
	\newblock {\em Computing and Visualization in Science}, 23(1):11, 2020.
	
	\bibitem{pachepsky2000simulating}
	Yakov Pachepsky, David Benson, and Walter Rawls.
	\newblock Simulating scale-dependent solute transport in soils with the
	fractional advective--dispersive equation.
	\newblock {\em Soil Science Society of America Journal}, 64(4):1234--1243,
	2000.
	
	\bibitem{plociniczak2014approximation}
	{\L}ukasz P{\l}ociniczak.
	\newblock Approximation of the erd{\'e}lyi--kober operator with application to
	the time-fractional porous medium equation.
	\newblock {\em SIAM journal on applied mathematics}, 74(4):1219--1237, 2014.
	
	\bibitem{plociniczak2015analytical}
	{\L}ukasz P{\l}ociniczak.
	\newblock Analytical studies of a time-fractional porous medium equation.
	derivation, approximation and applications.
	\newblock {\em Communications in Nonlinear Science and Numerical Simulation},
	24(1-3):169--183, 2015.
	
	\bibitem{plociniczak2019derivation}
	{\L}ukasz P{\l}ociniczak.
	\newblock Derivation of the nonlocal pressure form of the fractional porous
	medium equation in the hydrological setting.
	\newblock {\em Communications in Nonlinear Science and Numerical Simulation},
	76:66--70, 2019.
	
	\bibitem{plociniczak2022error}
	{\L}ukasz P{\l}ociniczak.
	\newblock Error of the galerkin scheme for a semilinear subdiffusion equation
	with time-dependent coefficients and nonsmooth data.
	\newblock {\em Computers \& Mathematics with Applications}, 127:181--191, 2022.
	
	\bibitem{plociniczak2023linear}
	{\L}ukasz P{\l}ociniczak.
	\newblock A linear galerkin numerical method for a quasilinear subdiffusion
	equation.
	\newblock {\em Applied Numerical Mathematics}, 185:203--220, 2023.
	
	\bibitem{plociniczak2024fully}
	{\L}ukasz P{\l}ociniczak and Kacper Ta{\'z}bierski.
	\newblock Fully discrete galerkin scheme for a semilinear subdiffusion equation
	with nonsmooth data and time-dependent coefficient.
	\newblock {\em Computers \& Mathematics with Applications}, 165:217--223, 2024.
	
	\bibitem{reynolds2013analytic}
	Jos{\'e}~Miguel Reynolds-Barredo, David~E Newman, and Ra{\'u}l Sanchez.
	\newblock An analytic model for the convergence of turbulent simulations
	time-parallelized via the parareal algorithm.
	\newblock {\em Journal of Computational Physics}, 255:293--315, 2013.
	
	\bibitem{Sak11}
	Kenichi Sakamoto and Masahiro Yamamoto.
	\newblock Initial value/boundary value problems for fractional diffusion-wave
	equations and applications to some inverse problems.
	\newblock {\em Journal of Mathematical Analysis and Applications},
	382(1):426--447, 2011.
	
	\bibitem{Sty16}
	Martin Stynes.
	\newblock Too much regularity may force too much uniqueness.
	\newblock {\em Fractional Calculus and Applied Analysis}, 19(6):1554--1562,
	2016.
	
	\bibitem{stynes2017error}
	Martin Stynes, Eugene O'Riordan, and Jos{\'e}~Luis Gracia.
	\newblock Error analysis of a finite difference method on graded meshes for a
	time-fractional diffusion equation.
	\newblock {\em SIAM Journal on Numerical Analysis}, 55(2):1057--1079, 2017.
	
	\bibitem{Sun17}
	Titiwat Sungkaworn, Marie-Lise Jobin, Krzysztof Burnecki, Aleksander Weron,
	Martin~J Lohse, and Davide Calebiro.
	\newblock Single-molecule imaging reveals {receptor--G protein} interactions at
	cell surface hot spots.
	\newblock {\em Nature}, 550(7677):543, 2017.
	
	\bibitem{tabei2013intracellular}
	SM~Ali Tabei, Stanislav Burov, Hee~Y Kim, Andrey Kuznetsov, Toan Huynh, Justin
	Jureller, Louis~H Philipson, Aaron~R Dinner, and Norbert~F Scherer.
	\newblock Intracellular transport of insulin granules is a subordinated random
	walk.
	\newblock {\em Proceedings of the National Academy of Sciences},
	110(13):4911--4916, 2013.
	
	\bibitem{trefethen2000spectral}
	Lloyd~N Trefethen.
	\newblock {\em Spectral methods in MATLAB}.
	\newblock SIAM, 2000.
	
	\bibitem{vergara2015optimal}
	Vicente Vergara and Rico Zacher.
	\newblock Optimal decay estimates for time-fractional and other nonlocal
	subdiffusion equations via energy methods.
	\newblock {\em SIAM Journal on Mathematical Analysis}, 47(1):210--239, 2015.
	
	\bibitem{wittbold2021bounded}
	Petra Wittbold, Patryk Wolejko, and Rico Zacher.
	\newblock Bounded weak solutions of time-fractional porous medium type and more
	general nonlinear and degenerate evolutionary integro-differential equations.
	\newblock {\em Journal of Mathematical Analysis and Applications},
	499(1):125007, 2021.
	
	\bibitem{wong2004anomalous}
	Ian~Y Wong, Margaret~L Gardel, David~R Reichman, Eric~R Weeks, Megan~T
	Valentine, Andreas~R Bausch, and David~A Weitz.
	\newblock Anomalous diffusion probes microstructure dynamics of entangled
	{F-actin} networks.
	\newblock {\em Physical Review Letters}, 92(17):178101, 2004.
	
	\bibitem{wu2015convergence}
	Shu-Lin Wu.
	\newblock Convergence analysis of some second-order parareal algorithms.
	\newblock {\em IMA Journal of Numerical Analysis}, 35(3):1315--1341, 2015.
	
	\bibitem{wu2017fast}
	Shu-Lin Wu and Tao Zhou.
	\newblock Fast parareal iterations for fractional diffusion equations.
	\newblock {\em Journal of Computational Physics}, 329:210--226, 2017.
	
	\bibitem{wu2018parareal}
	Shu-Lin Wu and Tao Zhou.
	\newblock Parareal algorithms with local time-integrators for time fractional
	differential equations.
	\newblock {\em Journal of Computational Physics}, 358:135--149, 2018.
	
	\bibitem{xu2015parareal}
	Qinwu Xu, Jan~S Hesthaven, and Feng Chen.
	\newblock A parareal method for time-fractional differential equations.
	\newblock {\em Journal of Computational Physics}, 293:173--183, 2015.
	
	\bibitem{yang2021robust}
	Jiang Yang, Zhaoming Yuan, and Zhi Zhou.
	\newblock Robust convergence of parareal algorithms with arbitrarily high-order
	fine propagators.
	\newblock {\em arXiv preprint arXiv:2109.05203}, 2021.
	
	\bibitem{Zac12}
	Rico Zacher.
	\newblock Global strong solvability of a quasilinear subdiffusion problem.
	\newblock {\em Journal of Evolution Equations}, 12(4):813--831, 2012.
	
\end{thebibliography}

\end{document}